\newtheorem{theorem}{Theorem}[section]
\newtheorem{lemma}[theorem]{Lemma}
\newtheorem{corollary}[theorem]{Corollary}
\newtheorem{remark}[theorem]{Remark}
\DeclareMathOperator{\str}{str}
\DeclareMathOperator{\Succ}{A}
\DeclareMathOperator{\Prec}{B}
\title{A note on the strength of a hypercube}
\author{Melissa A. Huggan\thanks{Department of Mathematics, Vancouver Island University, Nanaimo, BC, Canada}, M.E. Messinger\thanks{Department of Mathematics and Computer Science, Mount Allison University, Sackville, NB, Canada}, Dylan Pearson\thanks{Department of Mathematics and Statistics, Dalhousie University, Halifax, NS, Canada}}
\date{\today}
\begin{document}
\maketitle

\begin{abstract}  As a generalization of super magic strength, the \emph{strength} of a graph was introduced in [\textsc{R. Ichishima, F.A. Muntaner-Batle, A. Oshima, Bounds for the strength of graphs, Austral. J. of Combin. 72(3) (2018) 492-508}].  For a vertex ordering $f$ of graph $G$, the strength of $f$ is the maximum sum of the labels on any pair of adjacent vertices.  The strength of $G$ is defined as the minimum strength of $f$, taken over all vertex orderings of $G$.  The strength of the hypercube is unknown, but bounded. In this note, we provide an improved upper bound for the strength of a hypercube.
\end{abstract}



\section{Introduction}

Let $G$ be a graph on $n$ vertices and $f$ be a bijective function $f: V(G) \to \left\{1,2, \ldots, n\right\}$. The \emph{strength of $f$}, denoted $\str_f(G)$, is defined as \begin{equation*}\str_f(G) = \max \{ f(u)+f(v) : uv \in E(G)\}.\end{equation*} The \emph{strength} of $G$, denoted by $\str(G)$, is defined as \begin{equation*}\str(G) = \min \{ \str_f(G) : f \text{~is a vertex ordering of~} G\}.\end{equation*}  The strength of a graph was first defined in~\cite{I.2018} and was motivated by super edge-magic labelings and the super magic strength of a graph.  See the survey~\cite{WallisEdge} for more on super edge-magic labelings and~\cite{Av,I.2018} for more on super magic strength.  Ichishima et al.\ \cite{I.2018} determined the strength of a number of graph classes, including paths, cycles, complete graphs, and complete bipartite graphs. The strength of some trees, including caterpillars and complete $n$-ary trees was determined in~\cite{Trees}. The $n$-dimensional hypercube, denoted $Q_n$, is the graph whose vertices correspond to the $(0,1)$-binary strings of length $n$; two vertices being adjacent if and only if the corresponding binary strings differ in exactly one digit.  The $n$-dimensional hypercube, $Q_n$, was considered in~\cite{I.2018}, where they proved the following.

\begin{theorem}[\cite{I.2018}]\label{Hyper} For the $n$-dimensional hypercube $Q_n$, $\str(Q_1)=3$, $\str(Q_2)=6$, and for every integer $n \geq 3$, $$2^n+n \leq \str(Q_n) \leq 2^n+2^{n-2}+1.$$\end{theorem}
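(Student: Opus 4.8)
The plan is to settle the two small cases and the lower bound quickly, and to spend the real effort on an explicit ordering for the upper bound. The cases $n=1,2$ are direct: every ordering of $Q_1=K_2$ has strength $3$, while for $Q_2=C_4$ placing $1$ and $2$ on the two neighbours of the vertex labelled $4$ gives strength $6$. For the lower bound I would use the general estimate $\str(G)\ge |V(G)|+\delta(G)$: in any ordering $f$ the vertex $v$ with $f(v)=|V(G)|$ has at least $\delta(G)$ neighbours, whose labels are $\delta(G)$ distinct elements of $\{1,\dots,|V(G)|-1\}$, so the largest of them is at least $\delta(G)$ and the incident edge has sum at least $|V(G)|+\delta(G)$. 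Since $Q_n$ is $n$-regular on $2^n$ vertices, this gives $\str(Q_n)\ge 2^n+n$ (and already the sharp bound $6$ when $n=2$).

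For the upper bound I would exhibit a single explicit ordering. The guiding idea is that the larger half of the labels should occupy an independent set, so that no edge joins two large labels. Let $X$ and $Y$ be the even- and odd-weight classes of $Q_n$, each of size $2^{n-1}$; assign $\{1,\dots,2^{n-1}\}$ to $Y$ and $\{2^{n-1}+1,\dots,2^n\}$ to $X$. As every edge runs between $X$ and $Y$, each edge sum is automatically of the form (large)$+$(small). To pull the maximum down to $2^n+2^{n-2}+1$, write $m=2^{n-2}$ and split by the first coordinate: let $X_0,X_1$ (resp.\ $Y_0,Y_1$) be the halves of $X$ (resp.\ $Y$) with first coordinate $0$ and $1$, each of size $m$, and give the top $m$ labels of $X$ to $X_0$ and the top $m$ labels of $Y$ to $Y_1$. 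A short computation then shows that the edges inside the subcube $0Q_{n-1}$ (joining $X_0$ to $Y_0$), the edges inside $1Q_{n-1}$ (joining $X_1$ to $Y_1$), and the matching edges joining $Y_0$ to $X_1$ all have sum at most $2^n+2^{n-2}$.

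The main obstacle is that one cannot avoid all large$+$large edges: a size-$2^{n-1}$ independent set of $Q_n$ must be $X$ or $Y$, so some adjacency between two large labels is forced. The point of the construction is that it confines every such adjacency to one perfect matching, the first-coordinate matching $0w\leftrightarrow 1w$ joining $X_0$ to $Y_1$, which carries exactly the pairs between the top $m$ labels of $X$ and the top $m$ labels of $Y$. On this matching I would place the two top blocks in opposite orders, pairing the largest label of $X_0$ with the smallest of $Y_1$ and so on, so that every matched pair sums to exactly $2^n+2^{n-2}+1$. The step that needs care is this final reconciliation: checking that the reversing assignment on the matching is compatible with the block placement and that, combined with the estimates above, no edge exceeds $2^n+2^{n-2}+1$. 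This is a finite check over the four edge types rather than a genuine difficulty.
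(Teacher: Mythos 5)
This theorem is quoted from \cite{I.2018}; the present paper states it without proof, so your argument cannot be compared against an internal proof --- it stands on its own, and it is correct. The two small cases and the lower bound are fine: the minimum-degree argument (the vertex labelled $2^n$ has $n$ neighbours with distinct labels in $\{1,\dots,2^n-1\}$, the largest of which is at least $n$) is exactly the standard one. Your upper-bound construction also checks out. Writing $m=2^{n-2}$, your blocks receive $Y_0\to\{1,\dots,m\}$, $Y_1\to\{m+1,\dots,2m\}$, $X_1\to\{2m+1,\dots,3m\}$, $X_0\to\{3m+1,\dots,4m\}$, and every edge of $Q_n$ falls into one of four types: edges inside $0Q_{n-1}$ join $X_0$ to $Y_0$ (sum $\le 4m+m=2^n+2^{n-2}$), edges inside $1Q_{n-1}$ join $X_1$ to $Y_1$ (sum $\le 3m+2m=2^n+2^{n-2}$), first-coordinate matching edges join $Y_0$ to $X_1$ (sum $\le m+3m=2^n$) or $X_0$ to $Y_1$. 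The ``reconciliation'' you flag as the delicate step is in fact unproblematic: label $X_0$ arbitrarily with $\{3m+1,\dots,4m\}$ and then define $f(1w)=2^n+2^{n-2}+1-f(0w)$ for each matched pair $0w\in X_0$, $1w\in Y_1$; this map is a bijection from $Y_1$ onto $\{m+1,\dots,2m\}$, so the labelling is a genuine vertex ordering and every $X_0$--$Y_1$ edge sums to exactly $2^n+2^{n-2}+1$, which is then the maximum over all edges. It is worth observing that your labelling is a coarser version of the bijection the paper builds in Section~\ref{sec:defn} to \emph{improve} this bound: there too the odd-weight bipartition class receives the small half of the labels and the even-weight class the large half, but the sequence $S_n$ further stratifies each half by Hamming weight (reverse lexicographic within odd weights, lexicographic within even weights). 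That extra stratification is precisely what buys the improvement from $2^n+2^{n-2}+1$ to $2^n+2^{n-3}+28$ for large $n$; your two-blocks-per-side scheme is the price of the simpler bound.
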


For completeness, we note that the lower bound was improved in~\cite[Theorems 16, 17, and Corollary 18]{G.L.S.2021} to the following.

\begin{theorem}[\cite{G.L.S.2021}]\label{G.L.S.2021} For the $n$-dimensional hypercube $Q_n$,  $\str(Q_3) = 11$, $\str(Q_4) = 21$, $\str(Q_5) = 40$, $$\begin{array}{ll}\str(Q_n) \geq 2^n+4n-12 & \text{for $5 \leq n \leq 9$, and} \\
\str(Q_n) \geq 2^n+\lfloor \frac{n^2}{4}\rfloor +4 & \text{for $n \geq 10$.}\end{array}$$
\end{theorem}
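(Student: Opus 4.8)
The plan is to establish the lower bounds by a vertex-isoperimetric argument, the workhorse being Harper's theorem on the hypercube. Fix a strength-optimal ordering $f$ of $Q_n$ and abbreviate $s=\str(Q_n)$ and $N=2^n$. For $1\le k\le N$ let $U_k$ be the set of the $k$ vertices carrying the largest labels $N-k+1,\dots,N$, and let $\partial U_k$ denote its external neighbourhood. The first step is to convert the strength condition into a cap on the size of this boundary: if $w\in\partial U_k$ is adjacent to $v\in U_k$, then $f(w)\le s-f(v)\le s-(N-k+1)=s-N+k-1$, since $f(v)\ge N-k+1$. Hence $\partial U_k$ is contained in the $s-N+k-1$ vertices of smallest label, so $|\partial U_k|\le s-N+k-1$, that is, $s\ge N+|\partial U_k|-k+1$.

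The second step lower-bounds $|\partial U_k|$ independently of $f$. By Harper's vertex-isoperimetric inequality, among all $k$-subsets of $V(Q_n)$ the external boundary is minimised by an initial segment of the simplicial order, so $|\partial U_k|$ is at least the boundary $h(k)$ of such an extremal set. This gives $s\ge N+h(k)-k+1$ for every $k$, hence $\str(Q_n)\ge N+\max_{k}\bigl(h(k)-k+1\bigr)$. I would then evaluate this for $k$ in the first two layers, where the extremal set is the all-zero vertex together with $j$ weight-one vertices ($k=1+j$, $0\le j\le n$); counting the $n-j$ unused weight-one neighbours of the origin and the weight-two vectors meeting the chosen coordinates gives $h(1+j)=(n-j)+\binom n2-\binom{n-j}2$.

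The third step is the elementary discrete optimisation of $g(j)=h(1+j)-(1+j)+1 = n-2j+\binom n2-\binom{n-j}2$. Since $g(j)-g(j-1)=n-j-2$, the maximum is attained at $j=n-2$, yielding $\str(Q_n)\ge 2^n+\binom n2-n+3 = 2^n+\tfrac12(n^2-3n+6)$. This single estimate already implies everything claimed: it equals $11,21,40$ for $n=3,4,5$; it is at least $4n-12$ for $5\le n\le 9$; and it exceeds $\lfloor n^2/4\rfloor+4$ for $n\ge 10$, all by routine inequalities. For the three exact equalities one additionally needs matching orderings, which I would exhibit by hand (the generic upper bound $2^n+2^{n-2}+1$ is already tight for $Q_3$ and $Q_4$ but not for $Q_5$, where a strength-$40$ labelling must be built explicitly).

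I expect the main obstacle to be justifying the boundary bound $|\partial U_k|\ge h(k)$ at the needed level of precision. Invoking Harper is clean, but one must check that for the sizes in play the extremal configuration really is a sub-ball, so that the closed form for $h(1+j)$ is valid; if one pushes into higher layers to chase the true optimum of $h(k)-k$, the correct compression of a partial layer is a star rather than a clique — it is the star that minimises the relevant upper shadow — and getting this wrong inflates the bound spuriously. The remaining work, the optimisation of $g$ and the three small constructions, is routine by comparison.
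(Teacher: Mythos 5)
You should first note a structural point: this paper does not prove the statement at all --- Theorem~\ref{G.L.S.2021} is quoted from the cited source [G.L.S.2021], so there is no internal proof to compare against, and any correct argument you give is necessarily ``a different route.'' Your route is, as far as I can verify, correct, and it actually proves something stronger than the statement. The conversion of the strength condition into a boundary bound is sound: every $w\in\partial U_k$ satisfies $f(w)\le s-(N-k+1)$, hence $|\partial U_k|\le s-N+k-1$ and $s\ge N+|\partial U_k|-k+1$. Harper's vertex-isoperimetric theorem does give $|\partial U_k|\ge h(k)$ with $h(k)$ the boundary of an initial segment of the simplicial order; for $k=1+j\le n+1$ that segment is the origin together with $j$ weight-one vertices, and its boundary $(n-j)+\binom{n}{2}-\binom{n-j}{2}$ is independent of which $j$ weight-one vertices are chosen, so the within-layer compression subtlety you flag is vacuous in the only range you use. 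Your optimization is also right: $g(j)-g(j-1)=n-j-2$, so the maximum $\binom{n}{2}-n+3$ is attained at $j=n-2$ (and tied at $j=n-3$), giving $\str(Q_n)\ge 2^n+\tfrac{1}{2}(n^2-3n+6)$. This equals $11,21,40$ at $n=3,4,5$; it implies $2^n+4n-12$ for all $n$ since $(n-5)(n-6)\ge 0$; and it exceeds $2^n+\lfloor n^2/4\rfloor+4$ for all $n\ge 7$. Indeed, because your bound is strictly stronger than the cited one for $n\ge 7$, your argument cannot be a restatement of the source's (elementary, counting-based) derivation; what Harper's theorem buys is roughly $n^2/2$ in place of $n^2/4$, at the cost of invoking a heavy classical tool.

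Two small items remain before this is a complete proof of the statement as written. The three equalities need matching upper bounds: $Q_3$ and $Q_4$ follow from the bound $2^n+2^{n-2}+1$ of Theorem~\ref{Hyper} as you say, but for $Q_5$ you only promise a strength-$40$ labeling. This is a verification rather than a missing idea --- the bijection $S_5$ in Table~\ref{tab:n=5} of this very paper is such a labeling --- but a blind write-up must actually exhibit one and check it. Second, you should cite a precise form of Harper's theorem (external vertex boundary minimized by initial segments of the simplicial order) rather than gesture at it, since the entire improvement rests on that single black box.
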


In this note, we improve the upper bound for $\str(Q_n)$. We define a bijection $f$ between the set of all $n$-bit strings and $\{1,2,\dots,2^{n}\}$.  We first show the bijection gives the exact strength of $Q_3$, $Q_4$, and $Q_5$. We then determine the recurrence 
$$\str(Q_{n}) \leq \str_f(Q_{n}) \leq \str_f(Q_{n-2}) + 3 \cdot 2^{n-2} + \binom{n-3}{\lceil \frac{n-3}{2}\rceil}+\binom{n-2}{\lceil \frac{n-2}{2}\rceil}$$ in Corollary~\ref{cor:recurrence}. The recurrence provides an improvement over the upper bound of Theorem~\ref{Hyper} of \cite{I.2018} for $n \in \{5,6,\cdots,13\}$.  Ultimately, in Corollary~\ref{cor:last}, we show that for all $n \geq 14$, 

$$\str(Q_n) \leq 2^n+2^{n-3}+28,$$ which is an improvement of $2^{n-3}-27$ over the upper bound of Theorem~\ref{Hyper} of \cite{I.2018}. 

\section{An improved upper bound for $\mathbf{str(Q_n)}$}\label{sec:defn}

We state some important definitions before presenting the bijection between the $n$-bit strings and $\left\{1, 2, \ldots ,2^n\right\}$ and proving some initial observations.

Let $x=x_1x_2\cdots x_n$ and $y=y_1y_2\cdots y_n$ be distinct $n$-bit strings; that is, $x_i$, $y_i \in \left\{0,1\right\}$ for every $i\in \left\{1,2,\ldots , n\right\}$. The \emph{Hamming weight}, or simply \emph{weight} of a bit string is the number of non-zero bits in the string. The \emph{Hamming distance} between two $n$-bit strings $x$ and $y$ is the number of positions in which $x$ and $y$ differ. Consequently, two vertices of $Q_n$ are adjacent if and only if the Hamming distance between the corresponding $n$-bit strings is one.  

We define $x < y$ to be in {\it lexicographic order} if $x_i=y_i$ for $i<k$ and $x_k < y_k$ for some $k \leq n$.  We define \emph{reverse lexicographic order} as the reverse ordering of the bit strings.  That is, if $x<y<z$ then the sequence $(x,y,z)$ is in lexicographic order and the sequence $(z,y,x)$ is in reverse lexicographic order.  We will often refer to the lexicographic or reverse lexicographic ordering of the set of bit strings of a fixed length and weight, so we define $S_n^i$ to be the sequence of $n$-bit strings of weight $i$, taken in lexicographic order; and $R_n^i$ be the sequence of $n$-bit strings of weight $i$, taken in reverse lexicographic order. Finally, define $S_n$ to be the sequence: 
$$S_n = \big(R_n^1,R_n^3,R_n^5,\dots,R_n^{n-1},S_n^n,S_n^{n-2},\dots,S_n^4,S_n^2,S_n^0\big) ~~\text{for even $n$;}$$ $$S_n = \big(R_n^1,R_n^3,R_n^5,\dots,R_n^{n-2},R_n^n,S_n^{n-1},\dots,S_n^4,S_n^2,S_n^0\big) ~~\text{for odd $n$.}$$

Observe that $S_n^n = R_n^n$ since both contain only the string of weight $n$ (i.e.\ the all $1$s string).  Since every $n$-bit string appears exactly once in $S_n$ we can define a bijection $f$ between the sequence of all $n$-bit strings (in $S_n$) and $\{1,2,\dots,2^n\}$. Note that the first $2^{n-1}$ strings in $S_n$ each have odd weight and the second $2^{n-1}$ strings each have even weight. As an example, the sequences $S_n$ for $n = 3,4,5,6$ are given in Tables~\ref{tab:n=3}, \ref{tab:n=4}, \ref{tab:n=5}, \ref{tab:n=6}, respectively. Although it is already known that $\str(Q_3)=11$, $\str(Q_4)=21$ and $\str(Q_5) = 40$, the reader can verify from Tables~\ref{tab:n=3}, \ref{tab:n=4}, and~\ref{tab:n=5} that these values are achieved using the bijection provided by $S_n$.

\medskip

The \emph{one's complement} of a binary string is the string obtained by flipping all the bits in the representation, for example, $1011$ is the one's complement of $0100$.  We will represent the one's complement of a string $x$ as $\overline{x}$.  For odd $n$, suppose $x$ is an $n$-bit string with weight $i$ for some odd positive integer $i$.  Then $x \in R_n^i$ and $\overline{x} \in S_n^{n-i}$ and by construction of $S_n$, \begin{equation}\label{eqn:oddn}f(x) + 2^{n-1} = f(\overline{x}).\end{equation}

\begin{table}[h!]
\begin{minipage}{0.45\textwidth}
\begin{subtable}[t]{0.9\textwidth}
 \centering
 \begin{tabular}{|c|c||c|c|}
\hline
\text{bit string $x$} & $f(x)$ & \text{bit string $x$} & $f(x)$ \\
\hline
100 & 1 & 011 & 5 \\
010 & 2 & 101 & 6 \\
001 & 3 & 110 & 7 \\
\hline
111 & 4 & 000 & 8 \\
\hline
\end{tabular}
 \caption{Sequence $S_3$ with $R_3^1$, $R_3^3$ on the left, $S_3^2$, $S_3^0$ on the right.}
\label{tab:n=3}
\end{subtable}
\bigskip

\begin{subtable}{0.9\textwidth}
 \centering
\begin{tabular}{|c|c|}
\hline
\text{bit string $x$} & $f(x)$ \\
\hline
1000 & 1 \\
0100 & 2 \\
0010 & 3 \\
0001 & 4 \\
\hline
1110 & 5 \\
1101 & 6 \\
1011 & 7 \\
0111 & 8 \\
\hline
\end{tabular}\begin{tabular}{|c|c|}
\hline
\text{bit string $x$} & $f(x)$  \\
\hline
1111 & 9 \\
\hline
0011 & 10 \\
0101 & 11 \\
0110 & 12 \\
1001 & 13 \\
1010 & 14 \\
1100 & 15 \\
\hline
0000 & 16 \\
\hline
\end{tabular}
 \caption{Sequence $S_4$ with $R_4^1$, $R_4^3$ on the left, $S_4^4$, $S_4^2$, $S_4^0$ on the right.}
\label{tab:n=4}
\end{subtable}

\bigskip
 \begin{subtable}{0.9\textwidth}
 \centering
\begin{tabular}{|c|c||c|c|}
\hline
\text{bit string $x$} & $f(x)$ & \text{bit string $x$} & $f(x)$ \\
\hline
10000 & 1 & 01111 & 17 \\
01000 & 2 & 10111 & 18\\
00100 & 3 & 11011 & 19\\
00010 & 4 & 11101 & 20\\
00001 & 5 & 11110 & 21\\
\hline
11100 & 6 & 00011 & 22\\
11010 & 7 & 00101 & 23\\
11001 & 8 & 00110 & 24\\
10110 & 9 & 01001 & 25\\
10101 & 10 & 01010 & 26\\
10011 & 11 & 01100 & 27\\
01110 & 12 & 10001 & 28\\
01101 & 13 & 10010 & 29\\
01011 & 14 & 10100 & 30\\
00111 & 15 & 11000 & 31\\
\hline
11111 & 16 & 00000 & 32\\ 
\hline
\end{tabular}
\caption{Sequence $S_5$ with $R_5^1$, $R_5^3$, $R_5^5$ on the left, $S_5^4$, $S_5^2$, $S_5^0$ on the right.}
\label{tab:n=5}
\end{subtable}
\end{minipage}~~~~
\begin{minipage}{0.45\textwidth}
 \begin{subtable}[t]{0.9\textwidth}
 \centering
\begin{tabular}{|c|c|}
\hline
\text{bit string $x$} & $f(x)$ \\
\hline
100000 & 1 \\
010000 & 2 \\
001000 & 3 \\
000100 & 4 \\
000010 & 5 \\
000001 & 6 \\
\hline
111000 & 7 \\
110100 & 8\\
110010 & 9\\
110001 & 10\\
101100 & 11\\
101010 & 12\\
101001 & 13\\
100110 & 14\\
100101 & 15\\
100011 & 16\\
011100 & 17\\
011010 & 18 \\
011001 & 19\\
010110 & 20\\
010101 & 21\\
010011 & 22\\
001110 & 23\\
001101 & 24\\
001011 & 25\\
000111 & 26\\
\hline
111110 & 27\\
111101 & 28\\
111011 & 29\\
110111 & 30\\
101111 & 31\\
011111 & 32\\
\hline
\end{tabular}\begin{tabular}{||c|c|}
\hline
\text{bit string $x$} & $f(x)$ \\
\hline
111111 & 33\\
\hline
001111 & 34\\
010111 & 35\\
011011 & 36\\
011101 & 37\\
011110 & 38\\
100111 & 39\\
101011 & 40\\
101101 & 41\\
101110 & 42\\
110011 & 43\\
110101 & 44\\
110110 & 45\\
111001 & 46\\
111010 & 47\\
111100 & 48\\
\hline
000011 & 49\\
000101 & 50\\
000110 & 51\\
001001 & 52\\
001010 & 53\\
001100 & 54\\
010001 & 55\\
010010 & 56\\
010100 & 57\\
011000 & 58\\
100001 & 59\\
100010 & 60\\
100100 & 61\\
101000 & 62\\
110000 & 63\\
\hline
000000 & 64\\
\hline
\end{tabular} 
\caption{Sequence $S_6$ with $R_6^1$, $R_6^3$, $R_6^5$ on the left, $S_6^6$, $S_6^4$, $S_6^2$, $S_6^0$ on the right.}
\label{tab:n=6}
\end{subtable} 

\vspace{0.9in}
~

\end{minipage}
\caption{}
\end{table}

Finally, we use the notation $0x$ or $x0$ to indicate that we prefix $0$ to $x$ or append $0$ to $x$, respectively.  

\subsection{Preliminary observations}

\begin{remark}\label{remark1}
For any integer $n \geq 1$, let $x$ and $y$ be two $n$-bit strings. The vertices of $Q_n$ that correspond to $x$ and $y$ are adjacent in $Q_n$ if and only if $x$ and $y$ have Hamming distance one. Fixing a bijective function $f$ from the $n$-bit strings to $\{1, 2, \ldots, 2^n\}$ necessarily implies that there exist $n$-bit strings $x$ and $y$ such that $\str_f(Q_n) = f(x)+f(y)$. 

For a pair $x,y$ of $n$-bit strings for which  $\str_f(Q_n)=f(x)+f(y)$ and $x$ has weight $i$, we make the following assumption throughout this note: 

\begin{center}
    $x$ has odd weight $i$ and $y$ has even weight $i-1$.
\end{center}

\noindent
Since $x$ and $y$ have Hamming distance one, their weights will be of different parity; thus we may assume $i$ is odd.  Consequently, $y$ must have even weight $i-1$ or $i+1$.  Strings of weight $i-1$ fall later in sequence $S_n$ (and therefore map to larger function values) than strings of weight $i+1$. Thus, if $x$ has odd weight $i$ then $y$ has even weight $i-1$.
 \end{remark}

\begin{lemma}\label{lem:ends} For any integer $n \geq 2$, there exists an $(n-1)$-bit string $w$ such that $x=w1$, $y=w0$ and $\str_f(Q_n) = f(x)+f(y)$.
\end{lemma}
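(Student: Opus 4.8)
The plan is to start from the maximizing pair guaranteed by Remark~\ref{remark1}: fix $x,y$ with $\str_f(Q_n)=f(x)+f(y)$, where $x$ has odd weight $i$ and $y$ has even weight $i-1$, and let $k$ be the unique position in which they differ, so that $x_k=1$ and $y_k=0$. If $k=n$ we are already done, since then $x$ and $y$ agree on their first $n-1$ bits; writing $w$ for this common prefix gives $x=w1$ and $y=w0$. So the real work is to reduce the case $k<n$, in which $x$ and $y$ share the same final bit $b:=x_n=y_n$, to a pair that differs in position $n$ without decreasing the $f$-sum. I would split on the value of $b$.

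If $b=0$, I would push the differing $1$ to the end: let $x'$ be obtained from $x$ by setting position $k$ to $0$ and position $n$ to $1$. Then $x'$ still has weight $i$, the pair $(x',y)$ differs only in position $n$ (at position $k$ both are now $0$, and elsewhere both equal $x$), and $x'$ is lexicographically smaller than $x$, since they agree before position $k$ where $x'_k=0<1=x_k$. Because $R_n^i$ is listed in reverse lexicographic order inside $S_n$, being lexicographically smaller places $x'$ later in the sequence, so $f(x')\ge f(x)$ and hence $f(x')+f(y)\ge\str_f(Q_n)$. As $(x',y)$ is an edge, this inequality is forced to be an equality, and $(x',y)$ is the desired last-bit pair.

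If $b=1$, the symmetric move is to clear the final bit of $x$: let $y'$ agree with $x$ except that $y'_n=0$, so $y'$ has weight $i-1$ and $(x,y')$ differs only in position $n$. Comparing $y'$ with $y$, both of weight $i-1$, they differ exactly in positions $k$ and $n$, and at the first of these $y'_k=1>0=y_k$, so $y'$ is lexicographically larger than $y$; since $S_n^{i-1}$ is listed in (forward) lexicographic order, $f(y')>f(y)$. Then $(x,y')$ would be an edge with $f(x)+f(y')>f(x)+f(y)=\str_f(Q_n)$, which is impossible. Hence the case $b=1$ with $k<n$ cannot occur for a maximizing pair, and the previous paragraph already covers every remaining possibility.

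The step I expect to demand the most care is not the construction itself but keeping the two orderings straight: the odd-weight blocks $R_n^i$ sit in reverse lexicographic order while the even-weight blocks $S_n^{i-1}$ sit in forward lexicographic order, so ``lexicographically smaller'' and ``lexicographically larger'' must translate into the \emph{same} direction (larger $f$) in the two cases. The mild surprise, worth flagging explicitly, is that the $b=1$ case is dispatched not by a construction but by contradiction---the natural move there produces an edge with strictly larger $f$-sum, which maximality rules out---so that only $k=n$ and the $b=0$ case actually arise.
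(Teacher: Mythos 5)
Your proof is correct for the lemma as literally stated, and it runs on the same fuel as the paper's own proof: compare the maximizing pair against the pair obtained by moving the differing bit into the last position, and exploit the reverse-lexicographic order of $R_n^i$ (respectively the lexicographic order of $S_n^{i-1}$) together with maximality. Indeed, your $x'$ in the $b=0$ case is exactly the paper's $w1$, and your $y'$ in the $b=1$ case is exactly the paper's $z0$. The real difference lies in what is concluded. The paper's proof establishes the stronger, universal fact that the \emph{given} maximizing pair $(x,y)$ itself has the form $(w1,w0)$: it first forces the last bit of $y$ to be $0$, then forces $x=w1$. In your $b=0$ case you only exhibit a possibly \emph{different} maximizing pair $(x',y)$ of the required form. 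This matters downstream: Theorems~\ref{thm:both0}, \ref{thm:both1odd}, \ref{thm:both11even} and~\ref{thm:both10even} each fix an arbitrary maximizing pair with prescribed leading bits and invoke Lemma~\ref{lem:ends} to conclude that \emph{that very pair} differs only in its last bit, so the paper needs the universal version, which your argument as written does not deliver. Fortunately the fix is already inside your proof: since $x'\neq x$ and $x'$ lies strictly later in $R_n^i$ than $x$, injectivity of $f$ gives the strict inequality $f(x')>f(x)$, whence $f(x')+f(y)>\str_f(Q_n)$, a contradiction. So the case $b=0$, $k<n$ is just as impossible as $b=1$, $k<n$ (contrary to your closing remark that it ``actually arises''), and hence $k=n$ always holds --- which is precisely the universal statement the later theorems use.
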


\begin{proof} Let $x$ and $y$ be $n$-bit strings for which $\str_f(Q_n) = f(x)+f(y)$.  Consequently, $x$ and $y$ differ only in one digit. Assume, by Remark~\ref{remark1}, that $x$ has odd weight $i$ and $y$ has weight $i-1$.    We first prove the last digit in $y$ will be $0$, regardless of the last digit in $x$.

First, suppose the last digit in $x$ is $0$.  For a contradiction, assume the last digit in $y$ is $1$.  As $x$ and $y$ differ only in the last digit, $y$ has a larger weight than $x$, which provides a contradiction.  Consequently, if the last digit in $x$ is $0$ then the last digit in $y$ is $0$.

Second, suppose the last digit of $x$ is $1$, so $x=z1$ for some $(n-1)$-bit string $z$ of weight $i-1$. Of the $n$-bit strings of weight $i-1$ that differ from $x$ in exactly one digit, $z0$ has the largest binary representation. Since the subsequence $S_n^{i-1}$ in $S_n$ is in lexicographic order, $f(z0) \geq f(y)$.  Since $f(x)+f(y)$ is maximum, $y=z0$.  Consequently, if the last digit in $x$ is $1$ then the last digit in $y$ is $0$.

Therefore, if $\str_f(Q_n)=f(x)+f(y)$ and $x$ has odd weight, then the last digit of $y$ is $0$.  Thus, $y = w0$ for some $(n-1)$-bit string $w$ of weight $i-1$.  Of the $n$-bit strings of weight $i$ that differ from $y$ in exactly one digit, $w1$ has the smallest binary representation.  Since the subsequence $R_n^i$ in $S_n$ is in reverse lexicographic order, $f(w1) \geq f(x)$.  Since $f(x)+f(y)$ is maximum, $x=w1$.\end{proof} 

We state the next observation for $n$-bit strings of weights $i$ and $i-1$, but apply it to strings of other lengths and weights as well. For an $n$-bit string $x$ of weight $i$, we define $\Succ(x,R_n^i)$ to be the set of strings in $R_n^i$ that come after $x$.  For an $n$-bit string $y$ of  weight $i-1$, we define $\Prec(y,S_n^{i-1})$ to be the set of strings in $S_n^{i-1}$ that come before $y$.

Throughout the rest of the paper we will use the following binomial identities to simplify expressions. For $n, k \in \mathbb{Z}$, $n > k > 0$, 
\begin{equation}\label{Eq: Pascal's Triangle}
    \tbinom{n}{k} = \tbinom{n-1}{k-1} + \tbinom{n-1}{k}.
\end{equation}

For $n,k \in \mathbb{Z}$, $n\geq k \geq 0$, 
\begin{equation}\label{Eq: Symmetry}
    \tbinom{n}{k} = \tbinom{n}{n-k}.
\end{equation}

\begin{lemma}\label{obs:x'y'} For any integer $n \geq 2$, let $w$ be an $(n-1)$-bit string of weight $i-1$ for some odd integer $i$.  Then $$f(w1)+f(w0) = 2^n + \tbinom{n-1}{i} - |\Succ(w1,R_n^i)|+|\Prec(w0,S_n^{i-1})|+1.$$\end{lemma}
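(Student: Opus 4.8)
The plan is to compute $f(w1)$ and $f(w0)$ separately by counting how many strings precede each in the sequence $S_n$, and then to simplify the resulting sum of binomial coefficients using the identities provided. Since $f(x)$ is the position of $x$ within $S_n$, I would first locate $w1$, which has odd weight $i$ and therefore lies in the block $R_n^i$ of the odd-weight portion of $S_n$. Every odd-weight string of weight less than $i$ precedes it, contributing $\sum_{j\text{ odd},\,j<i}\binom{n}{j}$, and within $R_n^i$ exactly $\binom{n}{i}-1-|\Succ(w1,R_n^i)|$ strings precede it. Adding one for $w1$ itself gives $f(w1)=\sum_{j\text{ odd},\,j\le i}\binom{n}{j}-|\Succ(w1,R_n^i)|$.

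Next I would locate $w0$, which has even weight $i-1$ and so lies in the block $S_n^{i-1}$. Because the even-weight blocks appear in $S_n$ in \emph{decreasing} order of weight, after all $2^{n-1}$ odd-weight strings, the strings preceding $w0$ are exactly: the $2^{n-1}$ odd-weight strings; the even-weight strings of weight exceeding $i-1$, contributing $\sum_{j\text{ even},\,j>i-1}\binom{n}{j}$; and the $|\Prec(w0,S_n^{i-1})|$ strings preceding $w0$ inside $S_n^{i-1}$. Hence $f(w0)=2^{n-1}+\sum_{j\text{ even},\,j>i-1}\binom{n}{j}+|\Prec(w0,S_n^{i-1})|+1$.

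Adding the two expressions, the terms $-|\Succ(w1,R_n^i)|$, $+|\Prec(w0,S_n^{i-1})|$, and $+1$ already match the right-hand side of the claim, so it would remain to prove the binomial identity
$$\sum_{j\text{ odd},\,j\le i}\binom{n}{j}+\sum_{j\text{ even},\,j>i-1}\binom{n}{j}+2^{n-1}=2^n+\binom{n-1}{i}.$$
Here I would apply Pascal's rule~\eqref{Eq: Pascal's Triangle} to each summand: writing $\binom{n}{2k+1}=\binom{n-1}{2k}+\binom{n-1}{2k+1}$ collapses the first sum to $\sum_{m=0}^{i}\binom{n-1}{m}$, while $\binom{n}{2k}=\binom{n-1}{2k-1}+\binom{n-1}{2k}$ collapses the second sum to $\sum_{m=i}^{n-1}\binom{n-1}{m}$. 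Since $\binom{n-1}{i}$ is counted in both ranges, the two telescoped sums total $\sum_{m=0}^{n-1}\binom{n-1}{m}+\binom{n-1}{i}=2^{n-1}+\binom{n-1}{i}$, and adding the remaining $2^{n-1}$ yields $2^n+\binom{n-1}{i}$, as required.

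I expect the main obstacle to be this binomial telescoping, and in particular verifying the boundary behaviour: the second sum reaches its top even weight at $j=n$ when $n$ is even and at $j=n-1$ when $n$ is odd, so one must check that in both parities Pascal's rule carries the sum up to exactly $\binom{n-1}{n-1}$, using $\binom{n-1}{n}=0$ in the even case. Confirming that the two telescoped ranges overlap in precisely the single index $m=i$ is what produces the extra $\binom{n-1}{i}$; everything else is careful bookkeeping on the position of a string within the blocks of $S_n$.
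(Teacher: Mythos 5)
Your proposal is correct and takes essentially the same approach as the paper: both proofs compute $f(w1)$ and $f(w0)$ by locating each string within the blocks of $S_n$ (odd-weight blocks in increasing weight, then even-weight blocks in decreasing weight), expressing each position as a partial sum of binomial coefficients adjusted by $|\Succ(w1,R_n^i)|$, $|\Prec(w0,S_n^{i-1})|$, and the $+1$ term. The only difference is in the final algebra: the paper collapses the two partial sums via the alternating-sum identity $\sum_{k=0}^{\ell}(-1)^k\tbinom{N}{k}=(-1)^{\ell}\tbinom{N-1}{\ell}$, whereas you telescope directly with Pascal's rule~\eqref{Eq: Pascal's Triangle} --- which is precisely how that identity is proved, a step the paper relegates to a footnote.
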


\begin{proof} For any integer $n \geq 2$, let $w$ be an $(n-1)$-bit string of weight $i-1$ for some odd integer $i$. Since $w1 \in R_n^i$ and $w0 \in S_n^{i-1}$, we find $$f(w1) = |R_n^1|+|R_n^3|+\cdots+|R_n^i| - |\Succ(w1,R_n^i)| = \tbinom{n}{1}+ \tbinom{n}{3}+\cdots+\tbinom{n}{i}-|\Succ(w1,R_n^i)|$$ and $$f(w0) = 2^n - |S_n^0|-|S_n^2|-\cdots - |S_n^{i-1}| +|\Prec(w0,S_n^{i-1})|+1 = 2^n - \tbinom{n}{0}-\tbinom{n}{2}-\cdots-\tbinom{n}{i-1}+|\Prec(w0,S_n^{i-1})|+1.$$
To simplify the sum $f(w1)+f(w0)$, we use the identity\footnote{This can be proved by induction on $\ell$ and using the identity~(\ref{Eq: Pascal's Triangle}); as it is an undergraduate exercise, we omit the proof here.} $\sum_{k=0}^\ell (-1)^k \binom{N}{k} = (-1)^\ell \binom{N-1}{\ell}$ 
\begin{eqnarray*}f(w1)+f(w0) &=& 2^n- \Big( \sum_{k=0}^i (-1)^k \tbinom{n}{k}\Big) - |\Succ(w1,R_n^i)|+|\Prec(w0,S_n^{i-1})|+1 \\ &=& 2^n + \tbinom{n-1}{i} - |\Succ(w1,R_n^i)|+|\Prec(w0,S_n^{i-1})|+1.\end{eqnarray*}  \end{proof}

\subsection{Main results}\label{ss: Main}

We now provide an improved upper bound on the strength of the hypercube. Since $\str(Q_n)$ is known for $n \in \{1,2,3,4\}$, we consider $n \geq 5$ in this subsection.

\begin{theorem}\label{thm:both0} For any integer $n \geq 5$, let $x$ and $y$ be $n$-bit strings for which $\str_f(Q_{n})=f(x)+f(y)$.  If $x$ and $y$ both begin with $0$ then $$\str_f(Q_{n}) \leq \str_f(Q_{n-2}) + 3 \cdot 2^{n-2} + \binom{n-3}{\lceil \frac{n-3}{2}\rceil}+\binom{n-2}{\lceil \frac{n-2}{2}\rceil}.$$\end{theorem}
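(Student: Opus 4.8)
The plan is to reduce everything to the single $(n-2)$-bit string obtained by stripping the first and last bits of the optimal pair. By Lemma~\ref{lem:ends} I may write $x=w1$ and $y=w0$ for some $(n-1)$-bit string $w$ of weight $i-1$ with $i$ odd; the hypothesis that both strings begin with $0$ forces $w=0w'$ for an $(n-2)$-bit string $w'$, also of weight $i-1$, so that $x=0w'1$ and $y=0w'0$. Applying Lemma~\ref{obs:x'y'} gives $\str_f(Q_n)=f(x)+f(y)=2^{n}+\binom{n-1}{i}-|\Succ(w1,R_n^i)|+|\Prec(w0,S_n^{i-1})|+1$, so the entire problem reduces to understanding the two order-counts for $0w'1$ and $0w'0$.

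First I would peel off the leading $0$. A string is lexicographically smaller than $0w'1$ (respectively $0w'0$) only if it too begins with $0$, and reverse-lexicographic ``after'' coincides with lexicographic ``before''; hence both counts become counts of $(n-2)$-bit strings lying lexicographically below $w'$. Splitting on the final bit yields $|\Succ(0w'1,R_n^i)|=A+P$ and $|\Prec(0w'0,S_n^{i-1})|=P+B$, where $A$, $P$, $B$ denote the numbers of $(n-2)$-bit strings of weight $i$, $i-1$, $i-2$ respectively that are lexicographically smaller than $w'$. The weight-$(i-1)$ term $P$ occurs in both and cancels, leaving the clean identity $\str_f(Q_n)=2^{n}+\binom{n-1}{i}-A+B+1$. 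This cancellation is the conceptual heart of the reduction.

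It then remains to show $2^n+\binom{n-1}{i}-A+B+1\le \str_f(Q_{n-2})+3\cdot 2^{n-2}+\binom{n-3}{\lceil(n-3)/2\rceil}+\binom{n-2}{\lceil(n-2)/2\rceil}$. Since $2^n-2^{n-2}=3\cdot 2^{n-2}$, the leading powers of two are exactly absorbed by the $3\cdot 2^{n-2}$ term once $\str_f(Q_{n-2})$ is expanded via Lemma~\ref{obs:x'y'} (which itself contributes a $2^{n-2}$). I would produce the required lower bound on $\str_f(Q_{n-2})$ by exhibiting an explicit edge of $Q_{n-2}$ incident to $w'$, namely $w'$ together with a suitable Hamming-neighbour of weight $i$ or $i-2$, and applying Lemma~\ref{obs:x'y'} in dimension $n-2$. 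The order-counts for this edge partially reproduce $A$ and $B$, and Pascal's identity~(\ref{Eq: Pascal's Triangle}), rewriting $\binom{n-1}{i}=\binom{n-2}{i}+\binom{n-2}{i-1}$, lines up the remaining binomial terms with dimension $n-2$.

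The main obstacle is precisely this final matching. Because the neighbour of $w'$ forming the $Q_{n-2}$ edge must be at Hamming distance one, its position in the relevant lexicographic order need not coincide with that of $w'$, so the order-counts for the $Q_{n-2}$ edge differ from $A$ and $B$ by the strings lying in a lexicographic gap. I expect this discrepancy, together with the off-centre binomial terms such as $\binom{n-2}{i}$ and the analogous $(n-3)$-level count, to be controlled by their maximal values, which by symmetry~(\ref{Eq: Symmetry}) are the central coefficients $\binom{n-2}{\lceil(n-2)/2\rceil}$ and $\binom{n-3}{\lceil(n-3)/2\rceil}$. Verifying that these two central coefficients suffice to absorb the mismatch for every admissible weight $i$, and carrying out the parity bookkeeping distinguishing whether $w'$ ends in $1$ or $0$, is the delicate part of the argument.
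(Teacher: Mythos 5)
Your first two paragraphs are correct and are, in disguise, exactly the paper's opening: writing $x=0w'1$, $y=0w'0$ via Lemma~\ref{lem:ends}, your identities $|\Succ(0w'1,R_n^i)|=A+P$ and $|\Prec(0w'0,S_n^{i-1})|=P+B$ are equivalent to the paper's Equations~(\ref{eqn:succ}) and~(\ref{eqn:prec}), and the cancellation $\str_f(Q_n)=2^n+\tbinom{n-1}{i}-A+B+1$ is a valid consequence of Lemma~\ref{obs:x'y'}. The problem is everything after that, which you yourself defer as ``the delicate part'': it is not an omitted routine verification but a step that fails as you have set it up. The obstruction is structural. Your counts $A$ and $B$ live in weight classes $i$ and $i-2$, which are \emph{two} apart, while the endpoints of any edge of $Q_{n-2}$ incident to $w'$ lie in \emph{adjacent} weight classes; so Lemma~\ref{obs:x'y'} applied to any such edge can reproduce at most one of $A$, $B$, and the other gets replaced by a count $P=|\Prec(w',S_{n-2}^{i-1})|$ over the intermediate class $i-1$. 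The resulting error is a difference of counts across distinct weight classes, not ``strings lying in a lexicographic gap,'' and it is not absorbed by the central coefficients.

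Concretely, take $w'=v0$ and the last-bit edge $(v1,v0)$ of $Q_{n-2}$ (the cleanest choice, since then $|\Succ(v1,R_{n-2}^i)|=A$ exactly). Lemma~\ref{obs:x'y'} gives $\str_f(Q_{n-2})\ge 2^{n-2}+\tbinom{n-3}{i}-A+P+1$, and subtracting this from your identity leaves, after Pascal's rule~(\ref{Eq: Pascal's Triangle}), an excess of $2\tbinom{n-3}{i-1}+\tbinom{n-3}{i-2}+(B-P)$ over $3\cdot 2^{n-2}$. When $i-1=\tfrac{n-2}{2}$ (e.g.\ $n=10$, $i=5$) the binomial part $2\tbinom{7}{4}+\tbinom{7}{3}=105$ already equals the entire budget $\tbinom{7}{4}+\tbinom{8}{4}=105$, so you would need $B\le P$ for every admissible $w'$; this is false, e.g.\ $w'=00011110$ has $B=10$ but $P=4$. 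Since your plan uses only the \emph{form} of $w'$ and never which $w'$ attains the maximum, it cannot be completed by bounding discrepancies by maxima; one either needs a genuinely new argument exploiting optimality, or the paper's device, which you skipped: compare $(0w'1,0w'0)$ with the edge $(w'1,w'0)$ of the \emph{intermediate} hypercube $Q_{n-1}$, where both counts match exactly and the difference is precisely $2^{n-1}+\tbinom{n-2}{i-1}$, bound $f(w'1)+f(w'0)\le \str_f(Q_{n-1})$, and then apply the one-step bound~(\ref{YA}) a second time to descend from $n-1$ to $n-2$.
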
 

\begin{proof} Let $x,y$ be $n$-bit strings for which $\str_f(Q_{n}) = f(x)+f(y)$, and assume $x$ and $y$ both begin with $0$.  By Remark~\ref{remark1}, $x$ has odd weight $i$ and $y$ has even weight $i-1$.  By Lemma~\ref{lem:ends}, $x$ and $y$ differ only in that the last bit in $x$ is $1$ and the last bit in $y$ is $0$.  Therefore, $x = 0w1$ and $y=0w0$ for some $(n-2)$-bit string $w$ of weight $i-1$. We apply Lemma~\ref{obs:x'y'} to both $f(x)+f(y)$ and $f(w1)+f(w0)$:   
\begin{multline}\label{eqqq} \hspace{-0.1in}\Big(f(x) + f(y)\Big) - \Big(f(w1)+f(w0)\Big) \hfill  \\
=2^{n}-2^{n-1}+\tbinom{n-1}{i}-\tbinom{n-2}{i}-|\Succ(x,R_{n}^i)|+|\Prec(y,S_{n}^{i-1})|+|\Succ(w1,R_{n-1}^i)|-|\Prec(w0,S_{n-1}^{i-1})| \hfill \\
=2^{n-1}+\tbinom{n-2}{i-1}-|\Succ(0w1,R_{n}^i)|+|\Prec(0w0,S_{n}^{i-1})|+|\Succ(w1,R_{n-1}^i)|-|\Prec(w0,S_{n-1}^{i-1})|. \hfill
\end{multline}
Observe that $R_{n}^i$ consists of subsequence $R_{n-1}^{i-1}$ with $1$ prefixed to each string; followed by subsequence $R_{n-1}^{i}$, with $0$ prefixed to each string.  Consequently, the strings that succeed $x=0w1$ in $R_{n}^i$ and the strings that succeed $w1$ in $R_{n-1}^i$ are the same except that those in $R_{n}^i$ that have a $0$ prefixed.  Therefore, \begin{equation}\label{eqn:succ}|\Succ(0w1,R_{n}^i)|=|\Succ(w1,R_{n-1}^i)|.\end{equation}

Similarly, $S_{n}^{i-1}$ consists of subsequence $S_{n-1}^{i-1}$ with 0 prefixed to each string; followed by subsequence $S_{n-1}^{i-2}$ with $1$ prefixed to each string.  Thus, \begin{equation}\label{eqn:prec}|\Prec(0w0,S_{n}^{i-1})|=|\Prec(w0,S_{n-1}^{i-1})|.\end{equation}

Using Equations (\ref{eqn:succ})-(\ref{eqn:prec}), the expression from (\ref{eqqq}) simplifies to $$\Big(f(x) + f(y)\Big) - \Big(f(w1)+f(w0)\Big) = 2^{n-1}+\tbinom{n-2}{i-1} \leq 2^{n-1} + \tbinom{n-2}{\lceil (n-2)/2 \rceil}.$$   Finally, \begin{eqnarray}\str_f(Q_{n}) &=& f(x)+f(y)\nonumber \\ &\leq& f(w1)+f(w0)+2^{n-1}+\tbinom{n-2}{\lceil (n-2)/2 \rceil}\nonumber \\ &\leq& \str_f(Q_{n-1})+2^{n-1}+\tbinom{n-2}{\lceil (n-2)/2\rceil} \label{YA}\\
&\leq& \Big[\str_f(Q_{n-2})+2^{n-2}+\tbinom{n-3}{\lceil (n-3)/2 \rceil}\Big] +2^{n-1}+\tbinom{n-2}{\lceil (n-2)/2\rceil}\nonumber \\
&=& \str_f(Q_{n-2}) + 3 \cdot 2^{n-2}+\tbinom{n-3}{\lceil(n-3)/2 \rceil}+\tbinom{n-2}{\lceil (n-2)/2\rceil}.\nonumber
\end{eqnarray}\end{proof}

Note that the theorem statement is weaker than the result of Inequality~(\ref{YA}). We do this to ensure that upper bounds for all cases in this section match. 

\begin{theorem}\label{thm:both1odd} For any integer $n \geq 5$, let $x$ and $y$ be $n$-bit strings for which $\str_f(Q_{n}) = f(x)+f(y)$.  If $x,y$ both begin with $1$ and $n$ is odd, then $$\str_f(Q_{n}) \leq \str_f(Q_{n-2}) + 3 \cdot 2^{n-2} + \binom{n-3}{\lceil \frac{n-3}{2}\rceil}+\binom{n-2}{\lceil \frac{n-2}{2}\rceil}.$$
\end{theorem}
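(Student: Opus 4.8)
The plan is to exploit the one's complement symmetry available for odd $n$, recorded in Equation~(\ref{eqn:oddn}), to reduce this case to the already-established Theorem~\ref{thm:both0}. The key idea is that if $x$ and $y$ both begin with $1$, then their complements $\overline{x}$ and $\overline{y}$ both begin with $0$; I will show that $\overline{x},\overline{y}$ is again a strength-realizing pair, so that Theorem~\ref{thm:both0} applies to it verbatim.

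First I would record the structural facts. By Remark~\ref{remark1}, $x$ has odd weight $i$ and $y$ has even weight $i-1$. Complementation preserves Hamming distance, so $\overline{x}$ and $\overline{y}$ differ in exactly one position and hence are adjacent in $Q_n$; moreover, since $x$ and $y$ begin with $1$, both $\overline{x}$ and $\overline{y}$ begin with $0$. I also note the weights: $\overline{x}$ has weight $n-i$, which is even, and $\overline{y}$ has weight $n-i+1$, which is odd, the latter because $n$ and $i$ are both odd.

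Next I would compute $f(\overline{x})+f(\overline{y})$. Since $x$ has odd weight and $n$ is odd, Equation~(\ref{eqn:oddn}) gives $f(\overline{x}) = f(x)+2^{n-1}$. For $y$ I apply Equation~(\ref{eqn:oddn}) not to $y$ but to $\overline{y}$, which has odd weight: this yields $f(y)=f(\overline{\overline{y}})=f(\overline{y})+2^{n-1}$, hence $f(\overline{y})=f(y)-2^{n-1}$. Adding the two expressions, the shifts $\pm 2^{n-1}$ cancel and $f(\overline{x})+f(\overline{y})=f(x)+f(y)=\str_f(Q_n)$.

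Finally, $\overline{x}$ and $\overline{y}$ form an edge of $Q_n$ realizing $\str_f(Q_n)$ with both strings beginning with $0$, so they satisfy the hypotheses of Theorem~\ref{thm:both0} (the parity of their weights is swapped relative to $x,y$, but this is immaterial since that theorem reassigns the odd-weight role internally via Remark~\ref{remark1} and its conclusion is symmetric in the pair). Applying Theorem~\ref{thm:both0} to $\overline{x},\overline{y}$ yields exactly the claimed bound. The only delicate point is the bookkeeping in the third step: Equation~(\ref{eqn:oddn}) is valid only for odd-weight strings, so one must apply it to $x$ directly but to $\overline{y}$ (using $\overline{\overline{y}}=y$) rather than to $y$, and getting the parities and the direction of the $2^{n-1}$ shift correct is where care is needed, though no genuine computation is involved.
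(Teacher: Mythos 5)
Your proposal is correct and is essentially the paper's own argument: the paper likewise uses Equation~(\ref{eqn:oddn}) to show the complemented pair realizes $\str_f(Q_n)$ (writing $x=1w1$, $y=1w0$ via Lemma~\ref{lem:ends}, so that $\overline{x}=0\overline{w}0$, $\overline{y}=0\overline{w}1$) and then applies Theorem~\ref{thm:both0} to that pair. The only cosmetic difference is that you avoid the explicit $w$-decomposition, handling the cancellation of the $\pm 2^{n-1}$ shifts abstractly, with the same careful point about applying~(\ref{eqn:oddn}) to the odd-weight strings $x$ and $\overline{y}$.
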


\begin{proof} Let $x$ and $y$ be $n$-bit strings for which $str_f(Q_n) = f(x)+f(y)$, and assume $n$ is odd and $x$ and $y$ both begin with $1$. By Remark~\ref{remark1}, $x$ has odd weight $i$ and $y$ has even weight $i-1$.  By Lemma~\ref{lem:ends}, $x$ and $y$ differ only in that the last bit in $x$ is $1$ and the last bit in $y$ is $0$.  Therefore, $x = 1w1$ and $y=1w0$ for some $(n-2)$-bit string $w$ of weight $i-2$. 

Then $x = 1w1 \in R_{n}^i$, and since $n$ is odd, by Equation~(\ref{eqn:oddn}),  
$$f(x)+2^{n-1}=f(1w1) +2^{n-1} = f(\overline{x}) ~~ \Longleftrightarrow ~~ f(1w1)+2^{n-1} = f(0\overline{w}0).$$  Similarly, $y = 1w0 \in S_{n}^{i-1}$, and so $\overline{y} = 0\overline{w}1 \in R_{n}^{n-i+1}$.  As $n-i+1$ and $n$ are both odd, $$f(0\overline{w}1) + 2^{n-1} = f(1w0).$$  So $$\str_f(Q_{n}) = f(x)+f(y) = f(1w1)+f(1w0) = f(0\overline{w}1)+f(0\overline{w}0).$$

Observe that $0\overline{w}1$ is an $n$-bit string of odd weight $n-i+1$ and $0\overline{w}0$ is an $n$-bit string of even weight $n-i$. Since $\str_f(Q_{n})=f(0\overline{w}1)+f(0\overline{w}0)$, by Theorem~\ref{thm:both0},  $$\str_f(Q_{n}) \leq \str_f(Q_{n-2}) + 3 \cdot 2^{n-2} + \tbinom{n-3}{\lceil (n-3)/2\rceil}+\tbinom{n-2}{\lceil (n-2)/2\rceil}.$$\end{proof}

\begin{theorem}\label{thm:both11even} For any integer $n \geq 5$, let $x$ and $y$ be $n$-bit strings for which $\str_f(Q_{n})=f(x)+f(y)$. If $x$ and $y$ both begin with $11$ and $n$ is even, then $$\str_f(Q_{n}) \leq \str_f(Q_{n-2}) + 3 \cdot 2^{n-2} + \binom{n-3}{\lceil \frac{n-3}{2} \rceil}+\binom{n-2}{\lceil \frac{n-2}{2}\rceil}.$$ \end{theorem}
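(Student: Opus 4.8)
The plan is to mirror the counting argument of Theorem~\ref{thm:both0}, but to strip the prefix $11$ in a single step and reduce directly to $Q_{n-2}$. The reason I avoid reusing the complement argument of Theorem~\ref{thm:both1odd} is the conceptual obstacle here: the identity $f(x)+2^{n-1}=f(\overline{x})$ of Equation~(\ref{eqn:oddn}), which drove the odd case, depends on one's complement carrying an odd-weight string to an even-weight string, and this holds only when $n$ is odd. When $n$ is even, complementation preserves the parity of the Hamming weight, so it maps the odd-weight half of $S_n$ to itself and no uniform $+2^{n-1}$ shift exists. A direct count is therefore unavoidable.

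First I would pin down the shape of the extremal pair. By Remark~\ref{remark1} and Lemma~\ref{lem:ends}, $x=11v1$ and $y=11v0$ for some $(n-3)$-bit string $v$ of weight $i-3$, with $i$ odd. The $(n-2)$-bit strings $v1$ and $v0$ differ only in the last bit, so they are adjacent in $Q_{n-2}$ and $f(v1)+f(v0)\le \str_f(Q_{n-2})$. It then suffices to prove
$$\big(f(x)+f(y)\big)-\big(f(v1)+f(v0)\big)\le 3\cdot 2^{n-2}+\tbinom{n-3}{\lceil (n-3)/2\rceil}+\tbinom{n-2}{\lceil (n-2)/2\rceil}.$$
To evaluate the left-hand side I would apply Lemma~\ref{obs:x'y'} twice: once in $Q_n$, to the $(n-1)$-bit prefix $11v$ of weight $i-1$, and once in $Q_{n-2}$, to $v$ (odd weight $i-2$). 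This expresses the difference through $2^n-2^{n-2}$, a handful of binomial coefficients, and the four counts $|\Succ(11v1,R_n^i)|$, $|\Succ(v1,R_{n-2}^{i-2})|$, $|\Prec(11v0,S_n^{i-1})|$, and $|\Prec(v0,S_{n-2}^{i-3})|$.

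The heart of the argument is to match the $Q_n$ counts with their $Q_{n-2}$ counterparts. Peeling the leading bit twice, $R_n^i$ decomposes as $1R_{n-1}^{i-1}$ followed by $0R_{n-1}^i$, and then $R_{n-1}^{i-1}$ as $1R_{n-2}^{i-2}$ followed by $0R_{n-2}^{i-1}$; since $11v1$ sits in the leading $1$-block at both levels,
$$|\Succ(11v1,R_n^i)|=|\Succ(v1,R_{n-2}^{i-2})|+\tbinom{n-2}{i-1}+\tbinom{n-1}{i}.$$
Dually, $S_n^{i-1}$ decomposes as $0S_{n-1}^{i-1}$ followed by $1S_{n-1}^{i-2}$, then $S_{n-1}^{i-2}$ as $0S_{n-2}^{i-2}$ followed by $1S_{n-2}^{i-3}$; since $11v0$ sits in the trailing $1$-block at both levels,
$$|\Prec(11v0,S_n^{i-1})|=|\Prec(v0,S_{n-2}^{i-3})|+\tbinom{n-1}{i-1}+\tbinom{n-2}{i-2}.$$

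Substituting these, the unknown counts cancel in pairs, and a short telescoping with Pascal's identity~(\ref{Eq: Pascal's Triangle}) collapses the remaining binomials to exactly $3\cdot 2^{n-2}+\tbinom{n-2}{i-2}+\tbinom{n-3}{i-3}$. I would close by bounding $\tbinom{n-2}{i-2}$ and $\tbinom{n-3}{i-3}$ by their central values $\tbinom{n-2}{\lceil (n-2)/2\rceil}$ and $\tbinom{n-3}{\lceil (n-3)/2\rceil}$, and adding $f(v1)+f(v0)\le\str_f(Q_{n-2})$. The main obstacle I anticipate is the two-level bookkeeping of the $\Succ$ and $\Prec$ counts: one must check at each peel that $11v1$ lands in the first ($1$-prefixed) block of the reverse-lexicographic sequence while $11v0$ lands in the last ($1$-prefixed) block of the lexicographic sequence, so that precisely the offsets $\tbinom{n-1}{i},\tbinom{n-2}{i-1}$ and $\tbinom{n-1}{i-1},\tbinom{n-2}{i-2}$ are picked up. Once that is verified, the binomial manipulation is routine and lands on the claimed bound.
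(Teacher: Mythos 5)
Your proposal is correct and takes essentially the same approach as the paper's proof: the same reduction via Remark~\ref{remark1}, Lemma~\ref{lem:ends}, and Lemma~\ref{obs:x'y'} to bounding $\bigl(f(x)+f(y)\bigr)-\bigl(f(v1)+f(v0)\bigr)$ with $f(v1)+f(v0)\leq \str_f(Q_{n-2})$, the same prefix-block decomposition of $R_n^i$ and $S_n^{i-1}$ to evaluate the $\Succ$ and $\Prec$ counts, and the same collapse to $3\cdot 2^{n-2}+\tbinom{n-2}{i-2}+\tbinom{n-3}{i-3}$. The only cosmetic difference is that you strip the prefix $11$ one bit at a time, giving offsets $\tbinom{n-2}{i-1}+\tbinom{n-1}{i}$ and $\tbinom{n-1}{i-1}+\tbinom{n-2}{i-2}$, whereas the paper strips both bits at once, giving $2\tbinom{n-2}{i-1}+\tbinom{n-2}{i}$ and $\tbinom{n-2}{i-1}+2\tbinom{n-2}{i-2}$; these coincide by Pascal's identity~(\ref{Eq: Pascal's Triangle}).
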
 

\begin{proof} Let $x,y$ be $n$-bit strings for which $\str_f(Q_{n}) = f(x)+f(y)$ and assume $n$ is even, and $x$ and $y$ both begin with $11$.  By Remark~\ref{remark1}, $x$ has odd weight $i$ and $y$ has even weight $i-1$.  By Lemma~\ref{lem:ends}, $x$ and $y$ differ only in that the last bit in $x$ is $1$ and the last bit in $y$ is $0$.  Therefore, $x = 11w1$ and $y=11w0$ for some string $w$ of length $n-3$ and weight $i-3$. We apply Lemma~\ref{obs:x'y'} to both $f(x)+f(y)$ and $f(w1)+f(w0)$: 
$$f(x)+f(y) = 2^{n}+ \tbinom{n-1}{i} - |\Succ(x,R_{n}^i)|+|\Prec(y,S_{n}^{i-1})|+1,$$
$$f(w1)+ f(w0) = 2^{n-2} + \tbinom{n-3}{i-2}-|\Succ(w1,R_{n-2}^{i-2})|+|\Prec(w0,S_{n-2}^{i-3})|+1,$$ and so \begin{multline}\Big(f(x)+f(y)\Big) - \Big((f(w1)+f(w0)\Big) ~=~ 3 \cdot 2^{n-2} + \tbinom{n-1}{i}-\tbinom{n-3}{i-2} \\ -|\Succ(x,R_{n}^i)|+|\Succ(w1,R_{n-2}^{i-2})|+|\Prec(y,S_{n}^{i-1})|-|\Prec(w0,S_{n-2}^{i-3})|.\label{eqThirdOne}\end{multline}

To simplify Equation~(\ref{eqThirdOne}), we first relate $|\Succ(x,R_{n}^i)|$ to $|\Succ(w1,R_{n-2}^{i-2})|$.  Observe that $R_{n}^i$ consists of:
\begin{itemize} 
\item subsequence $R_{n-2}^{i-2}$ with $11$ prefixed to each string; followed by\vspace{-0.09in}
\item subsequence $R_{n-2}^{i-1}$, with $10$ prefixed to each string, followed by \vspace{-0.09in}
\item subsequence $R_{n-2}^{i-1}$, with $01$ prefixed to each string, followed by \vspace{-0.09in} 
\item subsequence $R_{n-2}^i$, with $00$ prefixed to each string.\end{itemize} Thus, \begin{equation}\label{eqFirstOne}|\Succ(x,R_{n}^i)| = |\Succ(11w1,R_{n}^i)| =  |\Succ(w1,R_{n-2}^{i-2})| + 2\tbinom{n-2}{i-1}+\tbinom{n-2}{i}.\end{equation}

We next relate $|\Prec(y,S_{n}^{i-1})|$ to $|\Prec(w0,S_{n-2}^{i-3})|$.  
 Observe that $S_{n}^{i-1}$ consists of:
\begin{itemize} 
\item subsequence $S_{n-2}^{i-1}$ with $00$ prefixed to each string; followed by \vspace{-0.09in}
\item subsequence $S_{n-2}^{i-2}$, with $01$ prefixed to each string, followed by\vspace{-0.09in}
\item subsequence $S_{n-2}^{i-2}$, with $10$ prefixed to each string, followed by\vspace{-0.09in} 
\item subsequence $S_{n-2}^{i-3}$, with $11$ prefixed to each string.\end{itemize} Thus, \begin{equation}\label{eqSecondOne} |\Prec(y,S_{n}^{i-1})| = |\Prec(11w1,S_{n}^{i-1})|= \tbinom{n-2}{i-1}+ 2\tbinom{n-2}{i-2}+|\Prec(w0,S_{n-2}^{i-3})|.\end{equation}

We now use Equations~(\ref{eqFirstOne}) and~(\ref{eqSecondOne}), along with Equation~(\ref{Eq: Pascal's Triangle}), to simplify Equation~(\ref{eqThirdOne}) to \begin{eqnarray*}\Big(f(x)+f(y)\Big) - \Big(f(w1)+f(w0)\Big) &=& 3 \cdot 2^{n-2} + \tbinom{n-1}{i}-\tbinom{n-3}{i-2} -\tbinom{n-2}{i-1}-\tbinom{n-2}{i}+2\tbinom{n-2}{i-2} \\ &=& 3 \cdot 2^{n-2} - \tbinom{n-3}{i-2}+2\tbinom{n-2}{i-2} \\ &=& 3 \cdot 2^{n-2} + \tbinom{n-3}{i-3}+\tbinom{n-2}{i-2}  \\ & \leq& 3 \cdot 2^{n-2} + \tbinom{n-3}{\lceil (n-3)/2\rceil}+\tbinom{n-2}{\lceil (n-2)/2 \rceil}.\end{eqnarray*} Thus, \begin{eqnarray*}\str_f(Q_{n}) = f(x)+f(y) &\leq& f(w1)+f(w0) + 3\cdot 2^{n-2}+ \tbinom{n-3}{\lceil (n-3)/2\rceil}+\tbinom{n-2}{\lceil (n-2)/2\rceil } \\ &\leq& \str_f(Q_{n-2})+ 3\cdot 2^{n-2}+ \tbinom{n-3}{\lceil (n-3)/2\rceil}+\tbinom{n-2}{\lceil (n-2)/2\rceil }.\end{eqnarray*}\end{proof}

What remains to consider is the case where $x$ and $y$ both begin with $10$ and $n$ is even; and the following lemma will be used. The lemma relates subsequence $S_{n-2}^j$ to subsequence $S_{n-2}^{n-j-2}$ (and also to subsequences $R_{n-2}^j$ and $R_{n-2}^{n-j-2}$).  This will be helpful in relating the number of $(n-2)$-bit strings that precede (and succeed) a particular $(n-2)$-bit string to the number of $(n-2)$-bit strings that precede (and succeed) its one's complement.

\begin{lemma}\label{lemma0.1} Let $n-2$ be an even positive integer and $j: 0\leq j \leq n-2$, with $j \neq \frac{n-2}{2}$ and $S_{n-2}^j = (v_1,v_2,\dots,v_{k-1},v_k)$, where $k = \binom{n-2}{j}$. Then \begin{itemize}
\item $S_{n-2}^{n-j-2} = (\overline{v_k},\overline{v_{k-1}},\dots,\overline{v_2},\overline{v_1})$;
\item $R_{n-2}^j = (v_k,v_{k-1},\dots,v_2,v_1)$; and 
\item $R_{n-2}^{n-j-2}=(\overline{v_1},\overline{v_2},\dots,\overline{v_{k-1}},\overline{v_k}).$\end{itemize}\end{lemma}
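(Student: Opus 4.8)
The plan is to derive all three statements from a single clean observation: taking the one's complement reverses lexicographic order on bit strings of a fixed length. Precisely, I would first prove the auxiliary claim that for any two distinct $(n-2)$-bit strings $x$ and $y$, we have $x < y$ in lexicographic order if and only if $\overline{y} < \overline{x}$ in lexicographic order. This follows by inspecting the first coordinate $k$ at which $x$ and $y$ differ: by the definition of lexicographic order, $x < y$ means $x_i = y_i$ for $i < k$ and $x_k < y_k$, which (since bits are $0$ or $1$) forces $x_k = 0$ and $y_k = 1$. But then $\overline{x}$ and $\overline{y}$ also first differ at position $k$, now with $\overline{x}_k = 1$ and $\overline{y}_k = 0$, so $\overline{y} < \overline{x}$. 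Since complementation is an involution, the converse direction is automatic.

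Next I would record that $x \mapsto \overline{x}$ is a bijection from the set of $(n-2)$-bit strings of weight $j$ onto the set of $(n-2)$-bit strings of weight $(n-2)-j = n-j-2$, since flipping all $n-2$ bits sends weight $j$ to weight $n-j-2$. Combining this bijection with the order-reversing claim yields the first bullet directly: writing $S_{n-2}^j = (v_1,\dots,v_k)$ with $v_1 < v_2 < \dots < v_k$, the order-reversing property gives $\overline{v_k} < \overline{v_{k-1}} < \dots < \overline{v_1}$, while the bijection guarantees that $\{\overline{v_1},\dots,\overline{v_k}\}$ is exactly the set of weight-$(n-j-2)$ strings. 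Listing these in increasing lexicographic order therefore produces $S_{n-2}^{n-j-2} = (\overline{v_k},\overline{v_{k-1}},\dots,\overline{v_1})$, as claimed.

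The remaining two bullets then fall out by unwinding the definition of reverse lexicographic order as the reversal of the corresponding lexicographic sequence. For the second bullet, $R_{n-2}^j$ is the reversal of $S_{n-2}^j = (v_1,\dots,v_k)$, which is $(v_k,v_{k-1},\dots,v_1)$, with no further work. For the third bullet, $R_{n-2}^{n-j-2}$ is the reversal of $S_{n-2}^{n-j-2}$; substituting the expression just obtained for $S_{n-2}^{n-j-2}$ and reversing gives $(\overline{v_1},\overline{v_2},\dots,\overline{v_k})$.

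I do not expect any substantive obstacle here; the entire content is the order-reversing claim, and the only thing requiring care is the bookkeeping — tracking which end of each sequence a complement lands on, and confirming that the hypothesis $j \neq \frac{n-2}{2}$ is precisely what keeps the weights $j$ and $n-j-2$ distinct, so that the three listed sequences are genuinely distinct objects rather than tautological restatements of one another. Accordingly, I would isolate the order-reversing property as a short standalone claim at the start of the proof and then invoke it, together with the weight-complement bijection, once for each of the three assertions.
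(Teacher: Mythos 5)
Your proposal is correct and follows essentially the same route as the paper's own proof: both rest on the single observation that one's complementation reverses lexicographic order, established by examining the first position where two strings disagree, and then obtain the two $R$-sequences by unwinding the definition of reverse lexicographic order as sequence reversal. Your write-up is slightly more explicit than the paper's (spelling out the weight-complement bijection and the involution property), but there is no substantive difference in the argument.
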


\begin{proof} 
Let $v_a,v_b \in S_{n-2}^j$ where $v_a < v_b$. Suppose $v_a$ and $v_b$ agree in the first $i-1$ digits, for some integer $i: 0 \leq i-1 \leq n-3$ and disagree in the $i$th digit. Since $v_a < v_b$, the $i$th digits of $v_a$ and $v_b$ are $0$ and $1$, respectively. Thus, $\overline{v_b} < \overline{v_a}$.  Applying the above argument to every pair $v_a$, $v_b$ for which $v_a < v_b$, the conclusion that $S_{n-2}^{n-j-2} = (\overline{v_k},\overline{v_{k-1}},\dots,\overline{v_2},\overline{v_1})$ follows.

If $S_{n-2}^j = (v_1,v_2,\dots,v_{k-1},v_k)$ then $R_{n-2}^j = (v_k,v_{k-1},\dots,v_2,v_1)$.  A similar argument shows $R_{n-2}^{n-j-2} = (\overline{v_1},\overline{v_2},\dots,\overline{v_{k-1}},\overline{v_k})$. \end{proof}

\begin{theorem}\label{thm:both10even} For any integer $n \geq 5$, let $x$ and $y$ be $n$-bit strings for which $\str_f(Q_{n}) = f(x)+f(y)$.  If $x$ and $y$ both begin with $10$ and $n$ is even, then $$\str_f(Q_{n}) \leq \str_f(Q_{n-2}) + 3 \cdot 2^{n-2} + \binom{n-3}{\lceil \frac{n-3}{2}\rceil}+\binom{n-2}{\lceil \frac{n-2}{2}\rceil}.$$ \end{theorem}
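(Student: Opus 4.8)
The plan is to mirror the computation in Theorem~\ref{thm:both11even}, but with a twist forced by parity. By Remark~\ref{remark1} and Lemma~\ref{lem:ends} I may write $x=10w1$ and $y=10w0$ for an $(n-3)$-bit string $w$ of weight $i-2$. Simply stripping the leading $10$ produces the $(n-2)$-bit strings $w1$ (of \emph{even} weight $i-1$) and $w0$ (of \emph{odd} weight $i-2$), which violate the convention of Remark~\ref{remark1}, so $f(w1)+f(w0)$ cannot be compared to $\str_f(Q_{n-2})$ directly. Instead I will reduce to the complementary pair $\overline{w}1,\overline{w}0$, which has weights $n-i$ (odd) and $n-i-1$ (even) and therefore \emph{does} fit the convention; this is exactly where Lemma~\ref{lemma0.1} is needed.

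First I apply Lemma~\ref{obs:x'y'} to the $(n-1)$-bit prefix $10w$, obtaining
$$f(x)+f(y)=2^n+\tbinom{n-1}{i}-|\Succ(x,R_n^i)|+|\Prec(y,S_n^{i-1})|+1.$$
Then, exactly as in Theorem~\ref{thm:both11even}, I decompose $R_n^i$ and $S_n^{i-1}$ into their four prefix-blocks (ordered $11,10,01,00$ and $00,01,10,11$, respectively), locating $x$ in the $10$-block of $R_n^i$ and $y$ in the $10$-block of $S_n^{i-1}$. This yields
$$|\Succ(x,R_n^i)|=|\Succ(w1,R_{n-2}^{i-1})|+\tbinom{n-2}{i-1}+\tbinom{n-2}{i},$$
$$|\Prec(y,S_n^{i-1})|=\tbinom{n-2}{i-1}+\tbinom{n-2}{i-2}+|\Prec(w0,S_{n-2}^{i-2})|.$$

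The crux is converting the residual quantities into statements about the complementary pair. Since $\overline{w1}=\overline{w}0$ and $\overline{w0}=\overline{w}1$, the order-reversal content of Lemma~\ref{lemma0.1} gives the position identities
$$|\Succ(w1,R_{n-2}^{i-1})|+|\Prec(\overline{w}0,S_{n-2}^{n-i-1})|=\tbinom{n-2}{i-1}-1,$$
$$|\Prec(w0,S_{n-2}^{i-2})|+|\Succ(\overline{w}1,R_{n-2}^{n-i})|=\tbinom{n-2}{i-2}-1.$$
Applying Lemma~\ref{obs:x'y'} once more to $\overline{w}1,\overline{w}0$, then substituting the two block expansions and the two position identities into $f(x)+f(y)$, I expect the binomials to telescope under Pascal's identity~(\ref{Eq: Pascal's Triangle}) (in particular $\tbinom{n-1}{i}-\tbinom{n-2}{i}-\tbinom{n-2}{i-1}=0$) and the symmetry~(\ref{Eq: Symmetry}) ($\tbinom{n-3}{n-i}=\tbinom{n-3}{i-3}$), collapsing to
$$f(x)+f(y)=\big(f(\overline{w}1)+f(\overline{w}0)\big)+3\cdot 2^{n-2}+\tbinom{n-3}{i-2}+\tbinom{n-2}{i-2}.$$
Bounding $f(\overline{w}1)+f(\overline{w}0)\le \str_f(Q_{n-2})$ (as $\overline{w}1\,\overline{w}0$ is an edge of $Q_{n-2}$) and each binomial by its central value then finishes the argument.

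The main obstacle is twofold. The bulk of the work is the bookkeeping of the four-block decompositions together with the complement-position identities: an off-by-one in how many strings precede or succeed a full block, or a misidentification of which block contains $x$ or $y$, would corrupt the final binomial terms. The subtler point is that Lemma~\ref{lemma0.1} is stated only for $j\neq\frac{n-2}{2}$, whereas here $j\in\{i-1,i-2\}$ can equal the self-complementary middle weight $\frac{n-2}{2}$ for one value of $i$ (depending on $n\bmod 4$). I would dispatch this by observing that the only fact I actually use—the order reversal of $S_{n-2}^{j}$ under complementation, hence $|\Succ|+|\Prec|=\tbinom{n-2}{j}-1$—remains valid even at the middle weight, where $S_{n-2}^{j}$ and $S_{n-2}^{n-j-2}$ coincide; so the excluded case presents no genuine difficulty.
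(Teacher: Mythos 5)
Your proposal is correct and takes essentially the same route as the paper's proof: it passes to the complementary pair $(\overline{w}1,\overline{w}0)$, applies Lemma~\ref{obs:x'y'} to both $(x,y)$ and $(\overline{w}1,\overline{w}0)$, uses the same four-block decompositions of $R_{n}^{i}$ and $S_{n}^{i-1}$, and your two ``position identities'' are precisely Lemma~\ref{lemma0.1} combined with the counting identities (\ref{eq2nd}) and (\ref{eqzz}), leading to the paper's exact difference $3\cdot 2^{n-2}+\tbinom{n-2}{i-2}+\tbinom{n-3}{i-2}$ before the central-binomial bound. Your closing observation that Lemma~\ref{lemma0.1} is stated only for $j\neq\tfrac{n-2}{2}$, while here $j\in\{i-1,i-2\}$ can hit that middle weight, flags a detail the paper applies without comment, and your fix (order reversal under complementation still holds when the sequence is self-complementary) is valid --- a worthwhile refinement, but not a change of approach.
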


\begin{proof} Let $x$ and $y$ be $n$-bit strings for which $\str_f(Q_{n}) = f(x)+f(y)$, and assume $n$ is even, and $x$ and $y$ both begin with $10$.  By Remark~\ref{remark1}, $x$ has weight $i$ and $y$ has weight $i-1$.  By Lemma~\ref{lem:ends}, $x$ and $y$ differ only in that the last bit in $x$ is $1$ and the last bit in $y$ is $0$.  Thus, $x = 10w1$ and $y=10w0$ for some string $w$ of length $n-3$ and weight $i-2$.  We will bound the difference $(f(x)+f(y)) - (f(\overline{w}1)+f(\overline{w}0))$ because we can exploit the relationships between $f(x)$ and $f(\overline{w}0)$ and between $f(y)$ and $f(\overline{w}1)$.  

By Lemma~\ref{obs:x'y'}, $$f(x)+f(y) = 2^{n}+\tbinom{n-1}{i}-|\Succ(x,R_{n}^i)|+|\Prec(y,S_{n}^{i-1})|+1$$ and since $\overline{w}$ is an $(n-3)$-bit string of weight $n-i-1$, by the same lemma: $$f(\overline{w}1)+f(\overline{w}0)=2^{n-2} + \tbinom{n-3}{n-i}-|\Succ(\overline{w}1,R_{n-2}^{n-i})|+|\Prec(\overline{w}0,S_{n-2}^{n-i-1})|+1.$$  Thus,\medskip 

\noindent $\Big(f(x)+f(y)\Big) - \Big(f(\overline{w}1)+f(\overline{w}0)\Big)$ $$=3 \cdot 2^{n-2}+\tbinom{n-1}{i}-\tbinom{n-3}{n-i} -|\Succ(x,R_{n}^i)|+|\Prec(y,S_{n}^{i-1})| +|\Succ(\overline{w}1,R_{n-2}^{n-i})|-|\Prec(\overline{w}0,S_{n-2}^{n-i-1})|$$ \begin{equation} =3 \cdot 2^{n-2} + \tbinom{n-1}{i}-\tbinom{n-3}{i-3}
-|\Succ(x,R_{n}^i)|+|\Prec(y,S_{n}^{i-1})|+|\Succ(\overline{w}1,R_{n-2}^{n-i})|-|\Prec(\overline{w}0,S_{n-2}^{n-i-1})|.
\label{eqtogether} \end{equation}

\bigskip

Our goal is to simplify Equation~(\ref{eqtogether}).  We first observe that \begin{equation}\label{eq2nd} |\Prec(\overline{w}0,S_{n-2}^{n-i-1})|+|\Succ(\overline{w}0,S_{n-2}^{n-i-1})|+1 = \tbinom{n-2}{n-i-1} = \tbinom{n-2}{i-1}.\end{equation}
We next relate $|\Succ(x,R_{n}^i)|$ to $|\Prec(\overline{w}0,S_{n-2}^{n-i-1})|$.  The sequence $R_{n}^i$ consists of \begin{itemize}
\item sequence $R_{n-2}^{i-2}$ with $11$ prefixed to each string in the sequence; followed by\vspace{-0.1in}
\item sequence $R_{n-2}^{i-1}$ with $10$ prefixed to each string in the sequence; followed by\vspace{-0.1in}
\item sequence $R_{n-2}^{i-1}$ with $01$ prefixed to each string in the sequence; followed by\vspace{-0.1in}
\item sequence $R_{n-2}^i$ with $00$ prefixed to each string in the sequence.
\end{itemize}
Then \begin{eqnarray} |\Succ(x,R_{n}^i)| &=& |\Succ(10w1,R_{n}^i)| \nonumber \\ &=& |\Succ(w1,R_{n-2}^{i-1})|+\tbinom{n-2}{i-1}+\tbinom{n-2}{i} \nonumber\\ 
&=& |\Succ(\overline{w}0,S_{n-2}^{n-i-1})|+\tbinom{n-2}{i-1}+\tbinom{n-2}{i} ~~~\text{by Lemma~\ref{lemma0.1}}\nonumber \\ &=& 2\tbinom{n-2}{i-1}+\tbinom{n-2}{i}-1-|\Prec(\overline{w}0,S_{n-2}^{n-i-1})| ~~~\text{by Equation~(\ref{eq2nd}).}\label{eq1st}\end{eqnarray}  Equation~(\ref{eq1st}) establishes the relationship between $|\Succ(x,R_{n}^i)|$ and $|\Prec(\overline{w}0,S_{n-2}^{n-i-1})|$.

We now observe that \begin{equation} |\Succ(\overline{w}1,R_{n-2}^{n-i})|+|\Prec(\overline{w}1,R_{n-2}^{n-i})|+1 = \tbinom{n-2}{n-i} = \tbinom{n-2}{i-2}\label{eqzz} \end{equation} and we next relate $|\Prec(y,S_{n}^{i-1})|$ to $|\Succ(\overline{w}1,R_{n-2}^{n-i})|$. The sequence $S_{n}^{i-1}$ consists of \begin{itemize}
\item sequence $S_{n-2}^{i-1}$ with $00$ prefixed to each string in the sequence; followed by\vspace{-0.1in}
\item sequence $S_{n-2}^{i-2}$ with $01$ prefixed to each string in the sequence; followed by\vspace{-0.1in}
\item sequence $S_{n-2}^{i-2}$ with $10$ prefixed to each string in the sequence; followed by\vspace{-0.1in}
\item sequence $S_{n-2}^{i-3}$ with $11$ prefixed to each string in the sequence.
\end{itemize}

Then \begin{eqnarray} |\Prec(y,S_{n}^{i-1})| &=& |\Prec(10w0,S_{n}^{i-1})| \nonumber \\  
&=& |\Prec(w0,S_{n-2}^{i-2})|+\tbinom{n-2}{i-2}+\tbinom{n-2}{i-1} \nonumber \\
&=& |\Prec(\overline{w}1,R_{n-2}^{n-i})|+\tbinom{n-2}{i-2}+\tbinom{n-2}{i-1} ~~~\text{by Lemma~\ref{lemma0.1}}\nonumber \\
&=& \tbinom{n-2}{i-2}-|\Succ(\overline{w}1,R_{n-2}^{n-i})|-1+\tbinom{n-2}{i-2}+\tbinom{n-2}{i-1} ~~~\text{by Equation~(\ref{eqzz}) \nonumber}\\
&=& 2\tbinom{n-2}{i-2}+\tbinom{n-2}{i-1}-1-|\Succ(\overline{w}1,R_{n-2}^{n-i})|.\label{eqczz}
\end{eqnarray}

We use Equations~(\ref{eq1st}) and~(\ref{eqczz}) to simplify the expression in Equation~(\ref{eqtogether}) to:
\begin{eqnarray*} \Big(f(x)+f(y)\Big) - \Big(f(\overline{w}1)+f(\overline{w}0)\Big) &=& 3 \cdot 2^{n-2} + \tbinom{n-1}{i}-\tbinom{n-3}{i-3}+2\tbinom{n-2}{i-2}-\tbinom{n-2}{i-1}-\tbinom{n-2}{i}\\ 
&=& 3 \cdot 2^{n-2} + 2\tbinom{n-2}{i-2}-\tbinom{n-3}{i-3} \\ &=& 3 \cdot 2^{n-2} +\tbinom{n-2}{i-2}+\tbinom{n-3}{i-2} \\ &\leq& 3 \cdot 2^{n-2} + \tbinom{n-2}{\lceil (n-2)/2\rceil }+\tbinom{n-3}{\lceil (n-3)/2\rceil}.\end{eqnarray*}

So $\str_f(Q_{n}) \leq \str_f(Q_{n-2}) + 3 \cdot 2^{n-2} +\tbinom{n-2}{\lceil (n-2)/2\rceil }+\tbinom{n-3}{\lceil (n-3)/2\rceil}$.
\end{proof}

The next corollary follows directly from Theorems~\ref{thm:both0},~\ref{thm:both1odd},~\ref{thm:both11even} and Theorem~\ref{thm:both10even}.

\begin{corollary}\label{cor:recurrence} For $n \geq 5$, $$ \str(Q_{n}) \leq \str_f(Q_{n-2})+3 \cdot 2^{n-2} + \binom{n-3}{\big\lceil \tfrac{n-3}{2}\big\rceil}+\binom{n-2}{\big\lceil \tfrac{n-2}{2}\big\rceil}.$$\end{corollary}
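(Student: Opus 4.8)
The plan is to combine the four preceding theorems through an exhaustive case analysis on the leading bits of the extremal pair. First I would invoke the definition of strength: since $f$ is one particular vertex ordering of $Q_n$ and $\str(Q_n)$ is the minimum of $\str_{f'}(Q_n)$ over all orderings $f'$, we have $\str(Q_n) \leq \str_f(Q_n)$. It therefore suffices to bound $\str_f(Q_n)$, and by Remark~\ref{remark1} we may fix $n$-bit strings $x$ and $y$ realizing $\str_f(Q_n) = f(x)+f(y)$, with $x$ of odd weight $i$ and $y$ of weight $i-1$.

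The key structural input is Lemma~\ref{lem:ends}, which gives $x = w1$ and $y = w0$ for a common $(n-1)$-bit string $w$. In particular $x$ and $y$ share every coordinate except the last, so they agree on their first bit and, since $n \geq 5$, on their first two bits. This is what makes the case split well-defined: the hypotheses ``both begin with $0$'', ``both begin with $1$'', ``both begin with $11$'', and ``both begin with $10$'' are genuine conditions on the shared prefix of $x$ and $y$, rather than requirements that could fail to hold simultaneously for the two strings.

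The exhaustion then runs as follows. If $x$ and $y$ both begin with $0$, the bound is Theorem~\ref{thm:both0}, which is valid for every $n \geq 5$ irrespective of parity. Otherwise $x$ and $y$ both begin with $1$. When $n$ is odd, Theorem~\ref{thm:both1odd} applies directly; when $n$ is even, I would read off the shared second bit, which is either $1$ or $0$, giving the prefix $11$ (handled by Theorem~\ref{thm:both11even}) or the prefix $10$ (handled by Theorem~\ref{thm:both10even}). Since each of the four theorems delivers exactly the inequality $\str_f(Q_n) \leq \str_f(Q_{n-2}) + 3\cdot 2^{n-2} + \binom{n-3}{\lceil (n-3)/2\rceil} + \binom{n-2}{\lceil (n-2)/2\rceil}$, the same bound holds in every case, and chaining it with $\str(Q_n) \leq \str_f(Q_n)$ completes the argument.

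There is essentially no analytic obstacle at this final step, since the four theorems have already done the substantive work and are engineered to produce identical right-hand sides; the remark following Theorem~\ref{thm:both0} notes that its statement was deliberately weakened from the sharper Inequality~(\ref{YA}) precisely so that the bounds across all cases would align. The only point demanding care is confirming that the case list is complete, namely that the four prefix conditions, read through Lemma~\ref{lem:ends}, genuinely exhaust every possibility for the leading one or two bits of the extremal pair. This is immediate once one observes that $x$ and $y$ coincide in all but their final coordinate, so their common leading bits are unambiguous and split cleanly into the listed cases.
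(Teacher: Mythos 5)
Your proof is correct and matches the paper's approach: the paper simply states that the corollary ``follows directly'' from Theorems~\ref{thm:both0}, \ref{thm:both1odd}, \ref{thm:both11even}, and~\ref{thm:both10even}, and your argument fills in exactly the intended (implicit) reasoning. In particular, your observation that Lemma~\ref{lem:ends} forces $x$ and $y$ to share all but their final bit---so that the four prefix conditions are well-defined and exhaustive---is the right justification for why the case analysis covers every possibility.
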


\subsection{A comparison of upper bounds}

\begin{table}[h]
\[
\setlength\arraycolsep{3.5pt}
\begin{array}{|c|c|c|}
\hline
n & \text{upper bound on $\str(Q_n)$} & \text{upper bound on $\str(Q_n)$} \\
~ & \text{from Theorem 1.1} &  \text{from Corollary 2.9} \\
\hline
5 & 41 & 40 \\
6 & 81 & 78 \\
7 & 161 & 152 \\
8 & 321 & 300 \\
9 & 641 & 591 \\
10 & 1281 & 1173 \\
11 & 2561 & 2323 \\
12 & 5121 & 4623 \\
13 & 10241 & 9181 \\
 \hline
\end{array}
\]
\caption{Comparison of upper bounds for small values of $n$.}\label{smalln}\end{table}

Table~\ref{smalln} compares the upper bound given by Corollary~\ref{cor:recurrence} to the upper bound given by Theorem~\ref{Hyper}~\cite{I.2018} for $n \in \{5,6,\dots,13\}$.  For larger values of $n$, we compare the bounds by using the following inequality, due to~\cite{Wallis}, which holds for $k \geq 1$: \begin{equation}\label{ineq}\tbinom{2k}{k} < \tfrac{4^k}{\sqrt{\pi k}}.\end{equation} We further observe that if $k \geq 6$ then $\sqrt{\pi k} > 4$. From Inequality~(\ref{ineq}), it follows that for $k \geq 6$, \begin{equation}\label{ineq6} \tbinom{2k}{k} < 4^{k-1}.\end{equation}

\begin{corollary}\label{cor:last} For all $n \geq 14$, $\str(Q_{n}) \leq 2^{n}+2^{n-3}+28.$ \end{corollary}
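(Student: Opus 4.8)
The plan is to unfold the recurrence from Corollary~\ref{cor:recurrence} down to a small base case and then bound the accumulated binomial terms using Inequality~(\ref{ineq6}). Writing $g(n) = 3\cdot 2^{n-2} + \binom{n-3}{\lceil (n-3)/2\rceil} + \binom{n-2}{\lceil (n-2)/2\rceil}$, the corollary gives $\str(Q_n) \le \str_f(Q_{n-2}) + g(n)$. Since the recurrence decreases $n$ by $2$ each time, I would iterate it, keeping the same parity of $n$, until reaching a base index for which $\str_f$ is already pinned down from the tables, namely $\str_f(Q_4) = 21$ when $n$ is even and $\str_f(Q_5) = 40$ when $n$ is odd. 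Thus I would write, for $n \ge 14$,
\begin{equation*}
\str(Q_n) \le \str_f(Q_{n_0}) + \sum_{\substack{m = n_0 + 2 \\ m \equiv n \,(2)}}^{n} g(m),
\end{equation*}
where $n_0 \in \{4,5\}$ matches the parity of $n$.

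Next I would split the sum $\sum g(m)$ into its two pieces. The powers-of-two piece telescopes cleanly: summing $3\cdot 2^{m-2}$ over every other value of $m$ from $n_0+2$ up to $n$ is a geometric series with ratio $4$, giving a closed form of the shape $2^{n-1} + c = 2^n - 2^{n-1} + c$ for an explicit small constant, which I expect to land near the target main term $2^n + 2^{n-3}$. The central-binomial piece $\sum \big(\binom{m-3}{\lceil (m-3)/2\rceil} + \binom{m-2}{\lceil (m-2)/2\rceil}\big)$ is where Inequality~(\ref{ineq6}) enters: each central binomial coefficient $\binom{2k}{k}$ (or the odd-index variant, which I would bound by the nearest even-index central binomial via Identity~(\ref{Eq: Pascal's Triangle}) and~(\ref{Eq: Symmetry})) is at most $4^{k-1}$ once $k \ge 6$, so each such term is at most roughly $2^{m-3}$. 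Summing these bounds over every other $m$ again gives a geometric series in ratio $4$, dominated by its top term of order $2^{n-3}$; I would need to check that the combination of the two binomial sums contributes at most $2^{n-3}$ plus a bounded constant.

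The main obstacle is bookkeeping the constant $28$ precisely. The geometric sums overshoot slightly and must be offset by the base value $\str_f(Q_{n_0})$ and by the finitely many small-$k$ central binomials (for $m$ small enough that $k < 6$, where Inequality~(\ref{ineq6}) does not apply and I must use exact values). I would handle this by bounding the infinite geometric tails $\sum_{m} 2^{m-3}$ from above rather than summing exactly, absorbing the finite head terms and the base constant into a single explicit constant, and then verifying numerically that the threshold $n = 14$ is exactly where the accumulated constant drops to $28$. Concretely, I would confirm the bound holds at $n = 14$ and $n = 15$ directly (using Table-derived base values and the exact small binomials), and then argue that the inductive step preserves the inequality $\str(Q_n) \le 2^n + 2^{n-3} + 28$: assuming it for $n-2$, adding $g(n)$ must not exceed the increment $(2^n + 2^{n-3}) - (2^{n-2} + 2^{n-5}) = 3\cdot 2^{n-2} + 2^{n-5} \cdot 3$, which forces the check $\binom{n-3}{\lceil (n-3)/2\rceil} + \binom{n-2}{\lceil (n-2)/2\rceil} \le 3 \cdot 2^{n-5}$ for $n \ge 14$. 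This last inequality is precisely what Inequality~(\ref{ineq6}) delivers, so I expect an induction to be cleaner than summing the series explicitly, with the entire difficulty reduced to verifying the two base cases and this one central-binomial estimate.
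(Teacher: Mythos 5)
Your proposal ends up organizing the estimate differently from the paper, and the route does work once one flaw is repaired. The paper unfolds the recurrence of Corollary~\ref{cor:recurrence} all the way down to $\str_f(Q_4)=21$ (even $n$) or $\str_f(Q_5)=40$ (odd $n$), merges the two binomial terms via $\tbinom{2k-1}{k}=\tfrac12\tbinom{2k}{k}$, evaluates the geometric sum exactly (getting $2^n+5$ in the even case), splits the binomial sum at $k=6$ --- this split is precisely where the hypothesis $n\ge 14$ enters --- computes the head $2\le k\le 5$ exactly (producing the constant $527$), and bounds the tail by $\sum_{k\ge 6}4^{k-1}$ using Inequality~(\ref{ineq6}), landing at $2^n+2^{n-3}+15$ for even $n$ and $2^n+2^{n-3}+28$ for odd $n$. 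Your induction replaces that series computation by the single increment check $\tbinom{n-3}{\lceil(n-3)/2\rceil}+\tbinom{n-2}{\lceil(n-2)/2\rceil}\le 3\cdot 2^{n-5}$, which does follow from Inequality~(\ref{ineq6}) after the same merging identity (for odd $n$ one gets $\tbinom{2k}{k}+\tfrac12\tbinom{2k+2}{k+1}<4^{k-1}+2\cdot 4^{k-1}=3\cdot 4^{k-1}$). This is arguably cleaner, but the total work is comparable: your base cases $n=14,15$ are not table lookups (the tables stop at $n=6$); they require unfolding the $\str_f$-recurrence down to $Q_4$/$Q_5$ with exact binomial values, which is the same finite computation that produces the paper's $527$. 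Also, your parenthetical claim that the powers-of-two piece sums to something of shape $2^{n-1}+c$ is off --- it sums to $2^n-\mathrm{const}$ --- but your induction never uses that, and the increment $3\cdot 2^{n-2}+3\cdot 2^{n-5}$ you compute is correct.

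The flaw that must be fixed: as written, your inductive step does not chain. The recurrence you quote is $\str(Q_n)\le \str_f(Q_{n-2})+g(n)$, with the \emph{fixed labeling} $f$ on the right-hand side, so assuming ``$\str(Q_{n-2})\le 2^{n-2}+2^{n-5}+28$'' gives you nothing about $\str_f(Q_{n-2})$: the inequality $\str_f(Q_{n-2})\ge\str(Q_{n-2})$ points the wrong way. The repair is to run the induction on the stronger statement $\str_f(Q_n)\le 2^n+2^{n-3}+28$. This is legitimate because Theorems~\ref{thm:both0}--\ref{thm:both10even} in fact prove $\str_f(Q_n)\le\str_f(Q_{n-2})+g(n)$ (their conclusions bound $\str_f$, not merely $\str$), and one passes to $\str(Q_n)\le\str_f(Q_n)$ only at the very end. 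Your own unfolded display, which correctly keeps $\str_f(Q_{n_0})$ on the right, already uses this $\str_f$-form implicitly, so the fix costs nothing --- but without it the induction is not valid.
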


\begin{proof} First, suppose $n-2$ is even and $n \geq 6$.  We use the fact that $\tbinom{2k-1}{k} = \tfrac{1}{2}\tbinom{2k}{k}$ and $\str(Q_4)=21$ to simplify the recurrence from Corollary~\ref{cor:recurrence} to that of Inequality~(\ref{eqnlist1}).
\begin{eqnarray} \str(Q_{n}) 
&\leq&\label{eqnlist1} 21+ 3 \Big(2^4+2^6+\cdots+2^{n-2}\Big)+\tfrac{3}{2} \Big(\tbinom{4}{2}+\tbinom{6}{3}+\cdots+\tbinom{(n-2)}{(n-2)/2}\Big) \\ 
&=& 21+3 \Big(\sum_{k=2}^{(n-2)/2} 4^k\Big)+\tfrac{3}{2} \Big(\sum_{k=2}^{(n-2)/2} \tbinom{2k}{k}\Big)\\ 
&=&\label{eqnlist9} 5+ 2^{n} +\tfrac{3}{2} \Big(\sum_{k=2}^{(n-2)/2} \tbinom{2k}{k}\Big)\\   
&=&\label{eqnlistH} 2^{n} + \tfrac{3}{2} \Big( \sum_{k=6}^{(n-2)/2} \tbinom{2k}{k}\Big) + 527\\
&<&\nonumber 2^{n} + \tfrac{3}{2} \Big( \sum_{k=6}^{(n-2)/2} 4^{k-1}\Big) + 527 ~~~~~~ \text{using~(\ref{ineq6})}\\  
&=&\nonumber 2^{n} + 2^{n-3}+15.\end{eqnarray}  Note that~(\ref{eqnlist1})-(\ref{eqnlist9}) hold for $n \geq 6$.  However, to get~(\ref{eqnlistH}), we split up a summation and this requires the condition that $n \geq 14$ (since the summation then runs from $k=6$ to $k=(n-2)/2$).

If $n$ is odd, then since $\str(Q_5)=40$,  a similar calculation shows that for odd $n \geq 15$, $\str(Q_{n}) \leq 2^{n}+2^{n-3}+28.$\end{proof}

Finally, we relate the bound in Corollary~\ref{cor:last} to the bound of Theorem~\ref{Hyper}~\cite{I.2018}.  
 
From Theorem~\ref{Hyper}~\cite{I.2018}, $\str(Q_n) \leq 2^n+2^{n-2}+1 = 2^n+2^{n-3}+28 +(2^{n-3}-27).$  Consequently, for $n \geq 14$, the bound from Corollary~\ref{cor:last} is an improvement of $(2^{n-3}-27)$ over Theorem~\ref{Hyper}~\cite{I.2018}.  Observe there remains room for improvement of the upper bound for $\str(Q_n)$: we used Inequality~(\ref{ineq6}), which holds for $k \geq 6$.  However, one could choose a larger value of $k$ to gain a different inequality; for example, if $k \geq 21$ then $\tbinom{2k}{k} < \frac{4^k}{8} = \frac{1}{2} \cdot 4^{k-1}$ and if $k \geq 82$ then $\tbinom{2k}{k} < \frac{4^k}{16} = 4^{k-2}$.  The tradeoff however, is that the constant term in the bound increases greatly. 

\medskip

\section*{Acknowledgements} 
Dylan Pearson acknowledges research support from the Atlantic Association for Research in the Mathematical Sciences (AARMS) and Mount Allison University. Melissa A. Huggan acknowledges research support from NSERC (grant application 2023-03395). M.E. Messinger acknowledges research support from NSERC (grant application 2018-04059).

\end{document}